\documentclass{amsart}
\usepackage{amsmath,latexsym,amsfonts,amssymb,amsthm,mathrsfs,longtable,lscape,dsfont,fancybox,yfonts}
\usepackage{amsthm}
\usepackage{amssymb}
\usepackage{amscd}
\usepackage{amsfonts}
\usepackage{amsbsy}
\usepackage{fancyhdr}
\usepackage{graphicx}
\usepackage[all]{xy}
\usepackage{epsfig}
\usepackage{epstopdf}
\newtheorem {teo} {Theorem} [section]

\newtheorem {prop} [teo]{Proposition}
\newtheorem {cor} [teo] {Corollary}

\newtheorem{rem}[teo]{Remark}
\begin{document}
\title[Hill's approximation in a restricted four-body problem]{Hill's approximation in a restricted four-body problem}
%, and applications to the dynamics of Trojan asteroids}
\author[Jaime Burgos--Garc\'ia and Marian Gidea]{Jaime Burgos--Garc\'ia$^\dag$ and Marian Gidea$^\ddag$}
\address{Department of Mathematical Science, Yeshiva University, NY 10016, USA}%
\email{jbg84@xanum.uam.mx}%
\address{Department of Mathematical Science, Yeshiva University, NY 10016, USA}%
\email{Marian.Gidea@yu.edu}%
\thanks{$^\dag$ Research of M.G. was partially supported by NSF grants   DMS 0635607.}
%\date{}
\keywords{Four-body problem; Hill's problem; equilibrium points; stability; KAM tori; homoclinic connections; Trojan asteroids.}

\subjclass[2010]{Primary, 70F10; 70F15; Secondary 	37J15; 37J45.}

\maketitle

\begin{abstract}
We consider a restricted four-body problem on the dynamics of a massless particle under the gravitational force produced by three  mass points forming an equilateral triangle configuration. We assume that  the mass $m_3$ of one primary is  very  small compared with the other two, $m_1$ and $m_2$,  and we study the Hamiltonian system describing the motion of the massless particle in a neighborhood of $m_3$.
In a similar way to Hill's  approximation of the lunar problem,  we perform a symplectic scaling, sending the  two massive bodies   to infinity, expanding the potential  as a power series in  $m_3^{1/3}$, and taking the  limit case  when  $m_3\rightarrow 0$. We show that the limiting Hamiltonian  inherits dynamical  features from both the restricted three-body problem and the restricted four-body problem. In particular,  it  extends the classical lunar Hill problem. We investigate the geometry of the Poincar\'e sections, direct and retrograde periodic orbits about $m_3$, libration points,  periodic orbits near libration points, their  stable and unstable manifolds, and the corresponding homoclinic intersections.
The motivation for this model is the study of the motion of a satellite  near a jovian Trojan asteroid.
\end{abstract}

\section{Introduction}
One of the first explicit solutions given in the three-body problem was the Lagrange central configuration, where three bodies of different masses lie at the vertices of an equilateral triangle, with each body traveling along a specific Kepler orbit. A special case of this solution is the rigid circular motion of the three bodies, when each body moves on a circular Kepler orbit.  Such configurations can be encountered in our solar system, one of the best known examples being  the configurations consisting of the Sun, Jupiter and either one of the two families of Trojan asteroids concentrated at the Lagrangian libration points.  Other families of Trojan-like  asteroids have been observed for the Sun-Mars  and Sun-Neptune systems. Also, Saturn--Tethys--Telesto, Saturn--Tethys--Calypso, and Saturn--Dione--Helen, respectively,  are known to form  Lagragian central configurations.

In this paper we consider a  restricted four-body problem (R4BP) describing the dynamics of  a massless particle in a neighborhood of a small mass at one of the vertices of a Lagrange central configuration. Denote the masses of the primaries at the vertices of the equilateral triangle by $m_1,m_2,m_3$, with $m_3\ll m_2\leq m_1$. We consider that the motion of the massless particle occurs in a small neighborhood of $m_3$. We derive its equations of motion via the following procedure: we perform a rescaling of the coordinates  depending on $m_3^{1/3}$,  write  the associated  Hamiltonian in the rescaled coordinates as a power series in $m_3^{1/3}$, and  consider the limiting Hamiltonian obtained by letting $m_3\rightarrow 0$. This procedure provides an approximation of the motion of the massless particle in an $O(m_3^{1/3})$-neighborhood  of $m_3$, while $m_1$ and $m_2$ are sent at infinite distance through the rescaling. This model  is an extension of the classical Hill approximation of the restricted three-body problem, with the major difference that our model is a four-body problem.

One of the main advantages of the Hill approximation over the restricted four-body problem is that it allows for the analytic computation of its equilibrium points and of their stability. Also, there are additional symmetries that make the geometry much simpler. Moreover, considering realistically small values of $m_3$ in the four-body problem (e.g., corresponding to a Trojan asteroid) makes analytical studies much more difficult,  and yields technical problems with the accuracy of numerical simulations; these inconveniences are not present in the Hill approximation.

If we let  $m_2\to 0$ in our model, then the resulting limit coincides with the classical  lunar Hill problem. We recall that  G.W.~Hill developed his   lunar theory \cite{Hill} as an alternative approach for the study of the motion of the Moon around the Earth. As a first approximation, his approach considers a Kepler problem (Earth-Moon) with a gravitational perturbation produced by a far away massive body (Sun),  and assumes that  the eccentricities of the orbits of the Moon and the Earth as well as the inclination of the Moon's orbit are zero.  Hill's approximation depends on a single parameter, namely the energy of the orbit.   Through his approach Hill was able to obtain the existence of a direct, periodic orbit describing the trajectory of the Moon, and the  inclusion of orbital elements to correct it. An important remark is that this direct orbit undergoes a pitchfork  bifurcation  as the energy level is varied. Also, the classical Hill approximation has two equilibrium points (libration points) of center-saddle type.

Our Hill approximation of the restricted four-body problem depends on two parameters, the mass ratio $\mu=m_2/(m_1+m_2)$, and the energy of the system. We also observe the existence of a direct, periodic orbit that bifurcates as the energy level is varied, however the bifurcation values depends on the second parameter $\mu$. Another significant difference is that our approximation has four libration points. Two of them are of saddle-center type, as in the classical case, while the other two are of stable of center-center type for $\mu$ less than some critical value $\mu_0=\frac{1}{224}\left[112-(2(1979+37(12097)^{1/2}))^{1/2}\right]\approx 0.00898964$, and unstable of complex-saddle type otherwise. In this sense,  our model has similar characteristics to the R4BP in the case when $m_2=m_3$ is sufficiently small, which also has a libration point that changes from stable to unstable as the mass ratio $m=m_2/m_1$ is increased passed some threshold value $m^*$ (see \cite{Leandro,BurgosII}). For comparison, the planar circular restricted three-body problem has five libration   points, three of center-saddle type, and two  that are  stable for $\mu<\mu_c = \frac{1}{2}[1-(23/27)^{1/2}]\approx 0.03852$ (Routh critical value), and unstable otherwise.
Again, we stress that our model borrows features from both the  restricted three-body problem (and its Hill limit) and from the restricted four-body problem.

A concrete situations that can be modeled with our Hill approximation is the motion of a spacecraft or of a natural satellite near a Trojan asteroid.
By a  `Trojan'  we mean, in general,  an asteroid or a natural satellite that lies in a Lagrange central configuration together with the Sun and a planet, or with a  planet and a  moon.
Very well known are the Trojan asteroids of the Sun-Jupiter system, which are divided into two large families, commonly referred to as the `Trojans' and the `Greeks'. The first family is centered at a point on Jupiter's orbit around the Sun at $60^\circ$ behind the planet, and the second family is centered at a point  on the same  orbit at $60^\circ$ ahead the planet; thus each of the two points forms an equilateral triangle with the Sun and Jupiter.
These points also coincide with the Lagrangian libration points $L_5$ and $L_4$, respectively, of the Sun-Jupiter system. Astronomical observations showed that the two families  are distributed on regions that extend up to $5.2$ AU away from $L_5$ and $L_4$, and many of the asteroids  have large orbital inclinations up to $40^\circ$ from Jupiter's orbit. Empirical models for the Trojan distribution along Jupiter's orbit indicate that the locations corresponding to the maximum density coincide with $L_5$ and $L_4$  \cite{NakamuraY08}.

One possible application of our model is to design spacecraft trajectories near a Trojan asteroid.  Exploration of the Trojan asteroids was recognized by the 2013 Decadal Survey, which includes Trojan Tour and Rendezvous, among the New Frontiers missions in the decade 2013-2022.

Another possible applications is the study of the stability of the Moon-like satellite   of the Trojan asteroid 624 Hektor \cite{Marchis}; this is the
first ever discovered satellite around a  jovian Trojan asteroid.
In future works we plan to include  relevant effects produced by inclinations and librations of the asteroids or perturbations due to other bodies.

It is worth mentioning that the model proposed  in this paper is related to other types of Hill approximations.
First of all, is closely connected to the  classical lunar Hill problem, introduced in \cite{Hill}, and subsequently studied in many papers, e.g. \cite{Eckert,MeyerS,Henon,Simo,Henon2003}.
The spatial version of the problem has been studied in, e.g.,  \cite{Henon1974,Bathkin,Michalodimitrakis,GomezMM}. Some Hill approximations of the four-body problem, very different from ours, have been considered in \cite{Mohn,Scheeres1,Scheeres2}. Also, several authors, e.g.,  \cite{Cronin,PapaIII,Cecc,GabernJorba,RobutelGabernJorba,RobutelGabern}, have considered other models of restricted four-body body problems to describe the dynamics of a particle in the Sun-Jupiter-Asteroid system.

\section{The restricted four body problem}
Consider three point masses moving under
mutual Newtonian gravitational attraction in
circular periodic orbits around their center of mass, while forming an equilateral
triangle configuration. A fourth massless particle is moving under the gravitational attraction of the three mass points, without affecting their motion. This problem is known as the equilateral restricted four body problem or simply as the restricted four body problem (R4BP).  We will assume that the three masses are $m_1\geq m_2\geq m_3$, and we will refer to $m_1$ as the primary, $m_2$ as the secondary, and $m_3$ as the tertiary.

The equations of motion of the massless particle in dimensionless coordinates
relative to a synodic frame of reference that rotates together with the  point masses are:
\begin{figure}%[!hbp]
\centering
\includegraphics[width=2.5in]{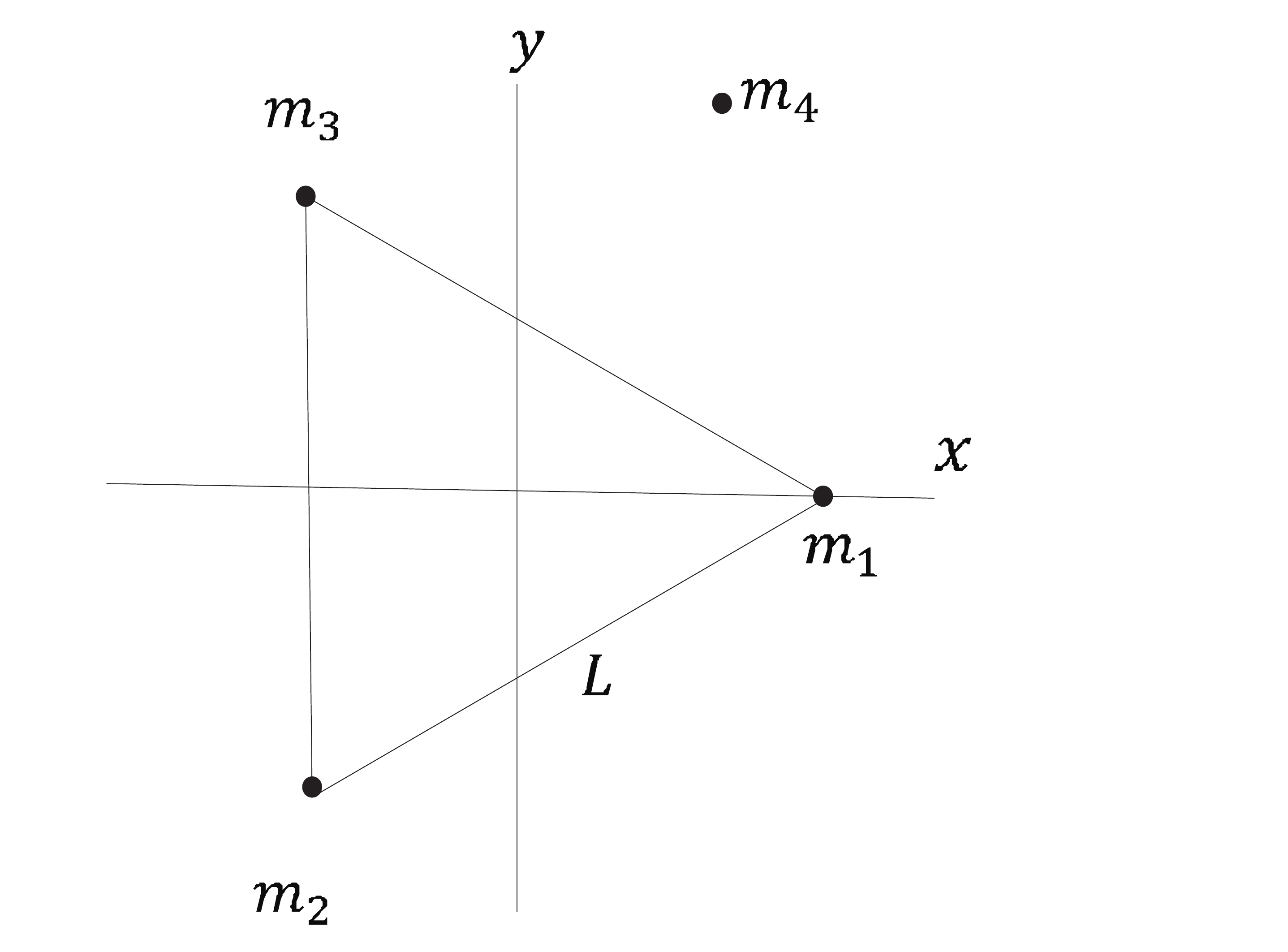}
\caption{The restricted four-body problem in a synodic system for the two equal masses case.\label{triangle}}
\end{figure}

\begin{equation}\begin{split}\label{ecuacionesfinales}
\ddot{x}-2\dot{y}&=\Omega_{x},\\
\ddot{y}+2\dot{x}&=\Omega_{y},\\
\ddot{z}&=\Omega_{z},\end{split}
\end{equation}
where $$\Omega(x,y,z)=\frac{1}{2}(x^{2}+y^{2})+\sum_{i=1}^{3}\frac{m_{i}}{r_{i}},$$
and $r_{i}=\sqrt{(x-x_{i})^{2}+(y-y_{i})^{2}+z^2}$, for $i=1,2,3$. The general expressions of the coordinates of the primaries in terms of the masses of the three point masses are given  as in \cite{PapaIII} by
\begin{equation}\begin{split}\label{coordinatesprimaries}
x_{1}&=\frac{-\vert K\vert\sqrt{m_{2}^{2}+m_{2}m_{3}+m_{3}^{2}}}{K},\\
y_{1}&=0,\\
z_1&=0,\\
x_{2}&=\frac{\vert K\vert[(m_{2}-m_{3})m_{3}+m_{1}(2m_{2}+m_{3})]}{2K\sqrt{m_{2}^{2}+m_{2}m_{3}+m_{3}^{2}}},\\
y_{2}&=\frac{-\sqrt{3}m_{3}}{2m_{2}^{3/2}}\sqrt{\frac{m_{2}^{3}}{m_{2}^{2}+m_{2}m_{3}+m_{3}^{2}}},\\
z_2&=0,\\
x_{3}&=\frac{\vert K\vert}{2\sqrt{m_{2}^{2}+m_{2}m_{3}+m_{3}^{2}}},\\ y_{3}&=\frac{\sqrt{3}}{2\sqrt{m_{2}}}\sqrt{\frac{m_{2}^{3}}{m_{2}^{2}+m_{2}m_{3}+m_{3}^{2}}},\\
z_3&=0, \end{split}
\end{equation}
where $K=m_{2}(m_{3}-m_{2})+m_{1}(m_{2}+2m_{3})$ and the three masses satisfy the relation $m_{1}+m_{2}+m_{3}=1$. It can be proved that the equations of motion have a first integral (the Jacobi integral) \[C=-(\dot{x}^2+\dot{y}^2+\dot{z}^2)+2\Omega.\]
Alternatively, we can regard the energy function $E=-C/2$ as a first integral.

We note that when  $m_{3}=0$ and $m_{2}:=\mu$ we recover the coordinates of the restricted three body problem (R3BP): \begin{equation*}\begin{split}(x_{1},y_{1},z_{1})&=(-\mu,0,0),\\(x_{2},y_{2},z_{2})&=(1-\mu,0,0),\\ (x_{3},y_{3},z_{3})&=(1/2-\mu,\sqrt{3}/2,0),\end{split}\end{equation*} where the position of the `phantom'  mass $m_{3}$ coincides with the  equilibrium point $L_{4}$ of the R3BP associated to $m_1$ and $m_2$.

Making  the transformation $\dot x=p_x+y$, $\dot y=p_y-x$, $\dot z=p_z$, the equations \eqref{ecuacionesfinales} are equivalent to the Hamiltonian equations for the Hamiltonian
\begin{equation}
H=\frac{1}{2}(p_{x}^{2}+p_{y}^{2}+p_{z}^{2})+yp_{x}-xp_{y}-\frac{m_{1}}{r_{1}}-\frac{m_{2}}{r_{2}}-\frac{m_{3}}{r_{3}},\label{originalhamiltonian}
\end{equation}
relative to the standard symplectic form $\omega=dp_x\wedge dx+dp_y\wedge dy+dp_z\wedge dz$ on $T^*W$ where $W=\mathbb{R}^3\setminus \{(x_1,y_1,z_1), (x_2,y_2,z_2), (x_3,y_3,z_3)\}$. Relative to this symplectic structure the Hamiltonian equations can be written as $\dot{\bf x}=J\nabla H({\bf x})$, where
$J=\left(
\begin{array}{cc}
0 & {\rm id} \\
{\rm -id} & 0 \\
\end{array}
\right)
$, and ${\bf x}=(x,y,z,p_x,p_y,p_z)$.
We also note that $H(x,y,z,p_x,p_y,p_z)=E(x,y,z,\dot x, \dot y,\dot z)$.

\section{The limit case and the equations of motion}

In this section we will study the limit when $m_{3}\rightarrow0$ in the Hamiltonian of the R4BP. We use a procedure similar to that in \cite{MeyerS,MeyerHDS}, by performing a symplectic scaling  depending on $m_3^{1/3}$, expanding the Hamiltonian as a power series in $m_3 ^{1/3}$ in a neighborhood of the small mass $m_{3}$, and then taking the limit as $m_3\to 0$. The resulting Hamiltonian will be a three-degree of freedom system depending on a parameter $\mu$ which becomes  equal to the mass of the secondary~$m_{2}$.

\begin{teo} \label{main theorem} After the symplectic scaling
$$(x,y,z,p_{x},p_{y},p_{z})\rightarrow m_{3}^{1/3}(x,y,z,p_{x},p_{y},p_{z}),
$$ the limit $m_{3}\rightarrow0$ of the Hamiltonian (\ref{originalhamiltonian}) restricted to a neighborhood of $m_{3}$ exists and yields a new Hamiltonian
\begin{equation}\begin{split}\label{hillhamiltonian}
H=&\frac{1}{2}(p^{2}_{x}+p^{2}_{y}+p_{z}^{2})+yp_{x}-xp_{y}+\frac{1}{8}x^2-\frac{3\sqrt{3}}{4}(1-2\mu)xy-
\frac{5}{8}y^2+\frac{1}{2}z^2\\
&-\frac{1}{\sqrt{x^2+y^2+z^2}},
\end{split}\end{equation}where $m_{1}=1-\mu$ and $m_{2}=\mu$.
\end{teo}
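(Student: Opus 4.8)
The plan is to carry out the classical Hill scaling procedure, in the spirit of \cite{MeyerS,MeyerHDS}: move the tertiary to the origin by a symplectic transformation, split off the Keplerian interaction of the particle with $m_{3}$ from the perturbation produced by $m_{1},m_{2}$, expand the latter, apply the symplectic dilation, and pass to the limit $m_{3}\to 0$. Keeping $m_{2}=\mu$ fixed and $m_{1}=1-\mu-m_{3}$, the coordinates \eqref{coordinatesprimaries} of the tertiary are analytic in $m_{3}$, and as $m_{3}\to 0$ the tertiary tends to the libration point $L_{4}=(\tfrac12-\mu,\tfrac{\sqrt3}{2},0)$ of the R3BP generated by $m_{1},m_{2}$. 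First I would apply the canonical transformation given by the configuration translation $x=x_{3}+u$, $y=y_{3}+v$, $z=w$ followed by the constant momentum translation $p_{x}=P_{x}-y_{3}$, $p_{y}=P_{y}+x_{3}$, $p_{z}=P_{z}$ (each preserves $\omega$; the second moves the relative-equilibrium momentum to the origin). A short computation turns \eqref{originalhamiltonian} into
\[
H=\tfrac12\big(P_{x}^{2}+P_{y}^{2}+P_{z}^{2}\big)+vP_{x}-uP_{y}+V(u,v,w)-\frac{m_{3}}{\sqrt{u^{2}+v^{2}+w^{2}}}+c(m_{3}),
\]
where $c(m_{3})=-\tfrac12(x_{3}^{2}+y_{3}^{2})$ is a constant, $V(u,v,w)=-\dfrac{m_{1}}{r_{1}}-\dfrac{m_{2}}{r_{2}}-x_{3}u-y_{3}v$, and $r_{1},r_{2}$ are the distances from $(u,v,w)$ to the translated positions $a_{1},a_{2}$ of $m_{1},m_{2}$, both bounded away from $0$.

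The crucial point is that the linear part of $V$ at the origin vanishes: $DV(0)=0$, equivalently the gradient of $-m_{1}/r_{1}-m_{2}/r_{2}$ at the origin equals $(x_{3},y_{3},0)$. This is exactly the central-configuration equation expressing that the tertiary is a relative equilibrium of the synodic system under the attraction of $m_{1},m_{2}$ and the centrifugal force; it holds identically in $m_{3}$ because \eqref{coordinatesprimaries} describe a genuine equilateral central configuration (in the limit it also follows from $L_{4}$ being a critical point of the R3BP effective potential). Granting this, Taylor expanding $V$ at the origin and absorbing the constant $V(0)$ into $c(m_{3})$ gives $V(u,v,w)=\tfrac12\,(u,v,w)\,D^{2}V(0)\,(u,v,w)^{\top}+O\big(|(u,v,w)|^{3}\big)$ up to a constant, $D^{2}V(0)$ being the Hessian of $V$ at $0$.

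Next I would apply the symplectic dilation $(u,v,w,P_{x},P_{y},P_{z})=m_{3}^{1/3}(\bar u,\bar v,\bar w,\bar P_{x},\bar P_{y},\bar P_{z})$ and rescale $H$ by $m_{3}^{-2/3}$ to restore the Hamiltonian form. The block $\tfrac12(P_{x}^{2}+P_{y}^{2}+P_{z}^{2})+vP_{x}-uP_{y}$ is homogeneous of degree two, hence invariant; the quadratic potential becomes $\tfrac12(\bar u,\bar v,\bar w)D^{2}V(0)(\bar u,\bar v,\bar w)^{\top}$ with $D^{2}V(0)=D^{2}V(0)|_{m_{3}=0}+O(m_{3})$; the cubic-and-higher remainder becomes $m_{3}^{-2/3}O(m_{3})=O(m_{3}^{1/3})$; the constant is discarded; and the Keplerian term becomes $-(\bar u^{2}+\bar v^{2}+\bar w^{2})^{-1/2}$. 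Hence, uniformly on compact subsets of $\{(\bar u,\bar v,\bar w)\neq 0\}$, the limit $m_{3}\to 0$ exists. Finally I would evaluate the Hessian at $m_{3}=0$: using the limiting positions $a_{1}=(-\tfrac12,-\tfrac{\sqrt3}{2},0)$ and $a_{2}=(\tfrac12,-\tfrac{\sqrt3}{2},0)$ of $m_{1},m_{2}$ relative to $L_{4}$, together with $|a_{1}|=|a_{2}|=1$ and $m_{1}+m_{2}=1$, one obtains $D^{2}V(0)|_{m_{3}=0}=\mathrm{Id}-3\big((1-\mu)a_{1}a_{1}^{\top}+\mu\,a_{2}a_{2}^{\top}\big)$, i.e.
\[
D^{2}V(0)\big|_{m_{3}=0}=\begin{pmatrix} 1/4 & -\tfrac{3\sqrt3}{4}(1-2\mu) & 0\\ -\tfrac{3\sqrt3}{4}(1-2\mu) & -5/4 & 0\\ 0 & 0 & 1\end{pmatrix},
\]
which produces the quadratic terms $\tfrac18\bar x^{2}-\tfrac{3\sqrt3}{4}(1-2\mu)\bar x\bar y-\tfrac58\bar y^{2}+\tfrac12\bar z^{2}$; renaming the barred variables gives exactly \eqref{hillhamiltonian}.

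I expect the second step to be the main obstacle: one must recognize and prove that the linear part of the $m_{1},m_{2}$-potential at the tertiary cancels — otherwise rescaling by $m_{3}^{-2/3}$ would make the Hamiltonian diverge like $m_{3}^{-1/3}$ — and one should trace this cancellation back to the equilateral central-configuration hypothesis rather than to an accident of the limit. The remaining work — tracking the powers of $m_{3}^{1/3}$, checking uniformity of the limit away from the collision set, and computing a single $3\times3$ Hessian — is routine.
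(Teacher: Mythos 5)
Your proposal is correct and follows essentially the same route as the paper: translate the origin to the tertiary (with the conjugate momentum shift), expand the potential of $m_{1},m_{2}$ about the new origin, apply the symplectic dilation with multiplier $m_{3}^{-2/3}$, and pass to the limit $m_{3}\to 0$. The one step you handle more cleanly is the cancellation of the linear term: you identify it as the central-configuration identity $m_{1}a_{1}+m_{2}a_{2}=-(x_{3},y_{3},0)$, valid for all $m_{3}$ since the triangle is equilateral of unit side with center of mass at the origin, whereas the paper verifies the same cancellation by direct computation with the explicit coordinates \eqref{coordinatesprimaries}; your closed form $D^{2}V(0)\big|_{m_{3}=0}=\mathrm{Id}-3\big((1-\mu)a_{1}a_{1}^{\top}+\mu\,a_{2}a_{2}^{\top}\big)$ also reproduces the stated quadratic part exactly.
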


\begin{proof} We consider the Hamiltonian of the restricted four body problem (R4BP) in the center of mass coordinates
$$
H=\frac{1}{2}(p_{x}^{2}+p_{y}^{2}+p_{z}^{2})+yp_{x}-xp_{y}-\frac{m_{1}}{r_{1}}-\frac{m_{2}}{r_{2}}-\frac{m_{3}}{r_{3}},
$$ where $r_{i}^{2}=(x-x_{i})^2+(y-y_{i})^2+z^2$ and $(x_{i},y_{i})$ denotes the $xy$-coordinates of the primary $m_{i}$ for $i=1,2,3$. We make the change of coordinates $x\rightarrow x+x_{3}$, $y\rightarrow y+y_{3}$, $z\rightarrow z$, $p_{x}\rightarrow p_{x}-y_{3}$, $p_{y}\rightarrow p_{y}+x_{3}$, $p_{z}\rightarrow p_{z}$, therefore in these new coordinates the Hamiltonian (\ref{originalhamiltonian}) becomes

\begin{equation}
H=\frac{1}{2}(p_{x}^{2}+p_{y}^{2}+p_{z}^{2})+yp_{x}-xp_{y}-(x_{3}x+y_{3}y)-\frac{m_{1}}{\bar{r}_{1}}-\frac{m_{2}}{\bar{r}_{2}}-\frac{m_{3}}{\bar{r}_{3}},\label{traslatedhamiltonian}
\end{equation}
where  $\bar{r}_{i}^{2}=(x+x_{3}-x_{i})^2+(y+y_{3}-y_{i})^2+z^2:=(x+\bar{x}_{i})^2+(y+\bar{y}_{i})^2+z^2$,  for $i=1,2,3$. We expand the terms $\frac{1}{\bar{r}_{1}}$ and $\frac{1}{\bar{r}_{2}}$ in Taylor series around the new origin of coordinates; if we ignore the constant terms we obtain the following expressions \begin{equation*}\begin{split}f^{1}:&=\frac{1}{\bar{r}_{1}}=\sum_{k\ge1}P_{k}^{1}(x,y,z),\\
f^{2}:&=\frac{1}{\bar{r}_{2}}=\sum_{k\ge1}P_{k}^{2}(x,y,z),\end{split}\end{equation*} where $P_{k}^{j}(x,y,z)$ is a homogenous polynomial of degree $k$ for $j=1,2$. We perform  the following symplectic scaling $x\rightarrow m_{3}^{1/3}x$, $y\rightarrow m_{3}^{1/3}y$, $z\rightarrow m_{3}^{1/3}z$, $p_{x}\rightarrow m_{3}^{1/3}p_{x}$, $p_{y}\rightarrow m_{3}^{1/3}p_{y}$ $p_{z}\rightarrow m_{3}^{1/3}p_{z}$ with multiplier $m_{3}^{-2/3}$, obtaining

\begin{equation}\begin{split}\label{scaled}
H=&\frac{1}{2}(p_{x}^{2}+p_{y}^{2}+p_{z}^{2})+yp_{x}-xp_{y}-\frac{1}{\bar{r}_{3}}-
m_{3}^{-1/3}(x_{3}x+y_{3}y+P_{1}^{1}+P_{1}^{2})\\
&-\sum_{k\ge2}m_{3}^{\frac{k-2}{3}}m_{1}P_{k}^{1}(x,y,z)-\sum_{k\ge2}m_{3}^{\frac{k-2}{3}}m_{2}P_{k}^{2}(x,y,z).
\end{split}\end{equation}
A straightforward computation shows \begin{equation*}\begin{split}P_{1}^{1}&=m_{1}(\frac{x_{1}-x_{3}}{\bar{r}_{1}^{3}}x+\frac{y_{1}-y_{3}}{\bar{r}_{1}^{3}}y), \\P_{1}^{2}&=m_{2}(\frac{x_{2}-x_{3}}{\bar{r}_{2}^{3}}x+\frac{y_{2}-y_{3}}{\bar{r}_{2}^{3}}y),\end{split}\end{equation*} where $\bar{r_{i}}=\sqrt{(x_{3}-x_{i})^2+(y_{3}-y_{i})^2}$.
It is important to note that the first partial derivative with respect to the variable $z$ is given by $$f^{i}_{z}=-\frac{z}{\bar{r}^{3}_{i}},$$ for $i=1,2$. Therefore we obtain
\begin{eqnarray*}
f^{i}_{z}(0,0,0)=f^{i}_{xz}(0,0,0)=f^{i}_{yz}(0,0,0)=0, \label{secondnullderivatives}
\end{eqnarray*}
and $$f^{i}_{zz}(0,0,0)=-1.$$ Now if we recall that the three masses are in equilateral configuration with length equal to one and we use the relation $m_{1}=1-m_{2}-m_{3}$ we obtain \begin{equation*}\begin{split}m_{3}^{-1/3}(x_{3}x+y_{3}y+P_{1}^{1}+P_{1}^{2})
=&m_{3}^{-1/3}[x_{1}+m_{2}(x_{2}-x_{1})-m_{3}(x_{1}-x_{3})]x,\\ &-m_{3}^{-1/3}[y_{1}+m_{2}(y_{2}-y_{1})-m_{3}(y_{1}-y_{3})]y,\end{split}\end{equation*}
which, in terms of the coordinates of the point masses  (\ref{coordinatesprimaries})  we can write as $$m_{3}^{-1/3}[y_{1}+m_{2}(y_{2}-y_{1})-m_{3}(y_{1}-y_{3})]=-m_{3}^{2/3}m_{2}s_{1}(m_{1},m_{2},m_{3})+m_{3}^{2/3}y_{3},$$ where $$s_{1}(m_{1},m_{2},m_{3})=\sqrt{\frac{3m_{2}^{3}}{4m_{2}^{3}(m_{2}^{2}+m_{2}m_{3}+m_{3}^{2})}},$$ and $$y_{3}=\frac{\sqrt{3}}{2\sqrt{m_{2}}}\sqrt{\frac{m_{2}^{3}}{m_{2}^{2}+m_{2}m_{3}+m_{3}^{2}}}.$$
A similar computation shows that the coefficient $m_{3}^{-1/3}[x_{1}+m_{2}(x_{2}-x_{1})-m_{3}(x_{1}-x_{3})]$ can be written in terms of a positive power of $m_{3}$. Therefore, the Hamiltonian (\ref{scaled}) becomes
\begin{equation}
H=\frac{1}{2}(p_{x}^{2}+p_{y}^{2}+p_{z}^{2})+yp_{x}-xp_{y}-\frac{1}{r}-m_{1}P_{2}^{1}-m_{2}P_{2}^{2}+\mathcal{O}(m_{3}^{1/3}).\label{unlimitedhamiltonian}
\end{equation}
We have defined $r=\bar{r}_{3}$.

Now we  take the limit $m_{3}\rightarrow0$ in the expression (\ref{unlimitedhamiltonian}); this means that the primary and the secondary are sent at an infinite distance, and their total mass becomes infinite.  After some computations the limiting Hamiltonian becomes
\begin{equation}\begin{split}\label{hillhamiltonian}
H=&\frac{1}{2}(p^{2}_{x}+p^{2}_{y}+p_{z}^{2})+yp_{x}-xp_{y}+\frac{1}{8}x^2-\frac{3\sqrt{3}}{4}(1-2\mu)xy-
\frac{5}{8}y^2+\frac{1}{2}z^2\\
&-\frac{1}{\sqrt{x^2+y^2+z^2}},
\end{split}\end{equation}
where $m_{2}:=\mu$ and $m_{1}=1-\mu$.\end{proof}

The gravitational and effective potential corresponding to the Hamiltonian \eqref{hillhamiltonian} are:
\begin{equation}
U=-\frac{1}{8}x^2+\frac{3\sqrt{3}}{4}(1-2\mu)xy+\frac{5}{8}y^2-\frac{1}{2}z^2+\frac{1}{\sqrt{x^2+y^2+z^2}},
\label{hillgravpotential}
\end{equation}
\begin{equation}
\Omega=\frac{1}{2}(x^2+y^2)+U=\frac{3}{8}x^2+\frac{3\sqrt{3}}{4}(1-2\mu)xy+\frac{9}{8}y^2-\frac{1}{2}z^2+\frac{1}{\sqrt{x^2+y^2+z^2}},\label{hillefectivepotential}
\end{equation}
respectively. The equations of motion can be written as in the full problem
\begin{equation}
\label{hillfinalequations}
\ddot{x}-2\dot{y}=\Omega_{x},
\end{equation}
$$\ddot{y}+2\dot{x}=\Omega_{y},$$ $$\ddot{z}=\Omega_{z},$$
with $\Omega$ is given by the equation (\ref{hillefectivepotential}).
\begin{rem}{$ $}\label{rem1}

\begin{itemize}
\item The expression
\begin{equation}
Q=\frac{1}{2}(p^{2}_{x}+p^{2}_{y}+p_{z}^{2})+yp_{x}-xp_{y}+\frac{1}{8}x^2-\frac{3\sqrt{3}}{4}(1-2\mu)xy-\frac{5}{8}y^2+\frac{1}{2}z^2,\label{quadraticpart}
\end{equation}
is the quadratic part of the expansion of the Hamiltonian of the restricted three-body problem centered at the Lagrange libration point $L_{4}$.
\item The range of the mass parameter is $\mu\in[0,1/2]$. The special case $\mu=0$  coincides with the classical lunar Hill problem after some coordinate transformation  (see Section \ref{transformation}). The case where $\mu=1/2$ corresponds to the case of equally massive bodies, similar to binary star systems.
\item We will prove in Section \ref{equilibriumpoints} that the system \eqref{hillhamiltonian} has 4 equilibrium points in a neighborhood of the tertiary, and these equilibrium points  possess the same stability properties as in the full R4BP when $m_{3}$ is sufficiently small but non zero.
\end{itemize}\end{rem}

In Fig. \ref{limithillregions} we plot the Hill regions (i.e., the projections of the energy manifold onto the configuration space) for the planar problem $z=0$, when $\mu= 0.00095$, which corresponds to mass ratio of the Sun-Jupiter system. In the first row we show side by side the Hill  regions for the limit problem and for the full R4BP when $m_{3}=7.03\times10^{-12}$, the mass ratio of the asteroid 624 Hektor; the lines in the second figure are imaginary lines connecting $m_{3}$ with the remaining masses. In the second row we plot the position of the tertiary and its relation with the primary and secondary in the full R4BP. In the third row we plot  the positions of the four equilibrium points are around the tertiary.

\begin{figure}
  \centering
\begin{tabular}{cc}
  \includegraphics[width=2.35in]{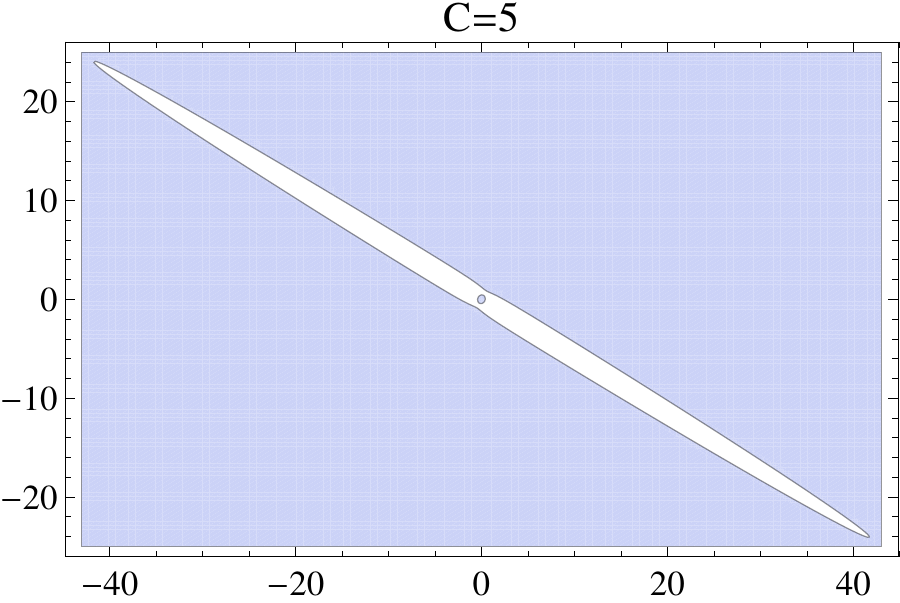}& \includegraphics[width=2.35in,height=1.56in]{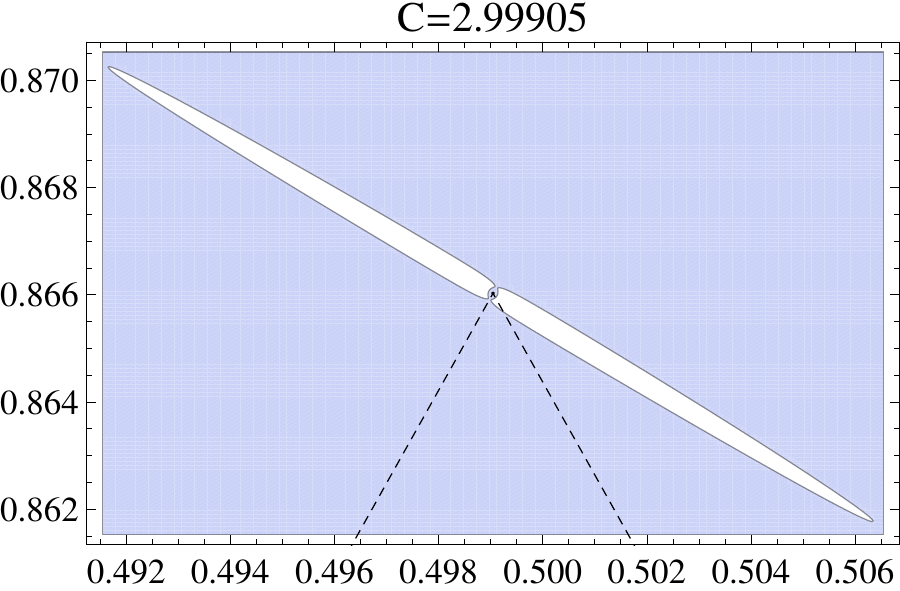}\\
  \includegraphics[width=2.35in]{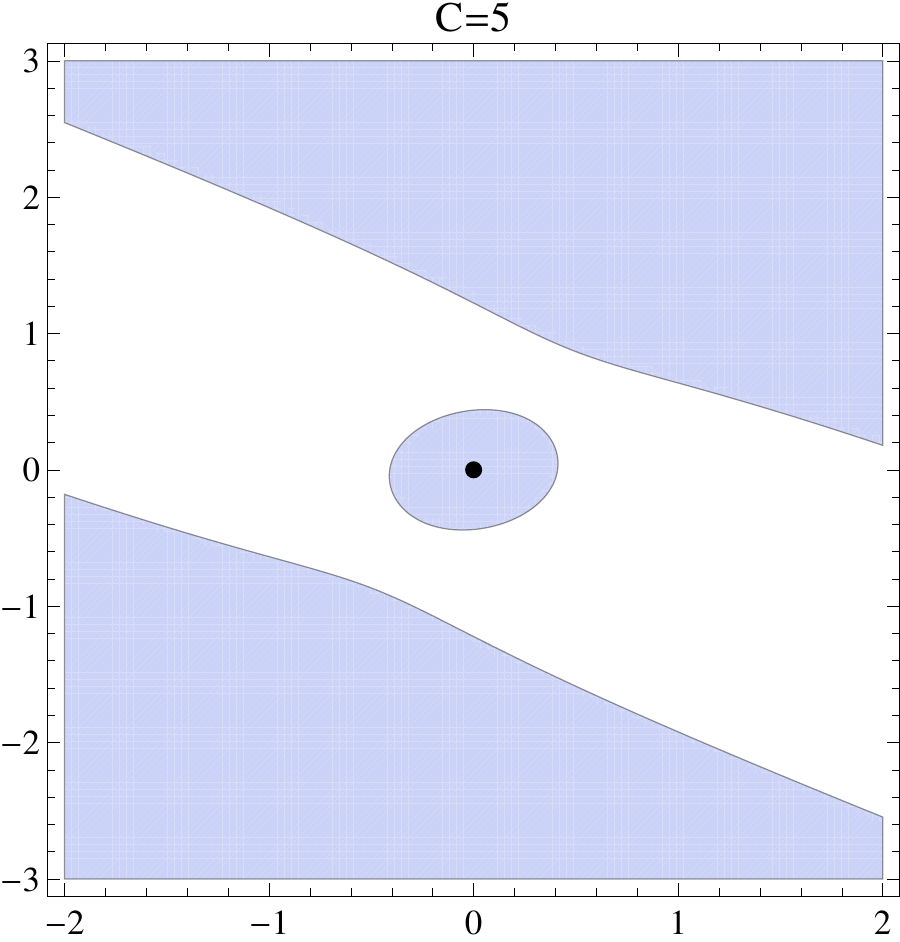}&\includegraphics[width=2.35in,height=2.48in]{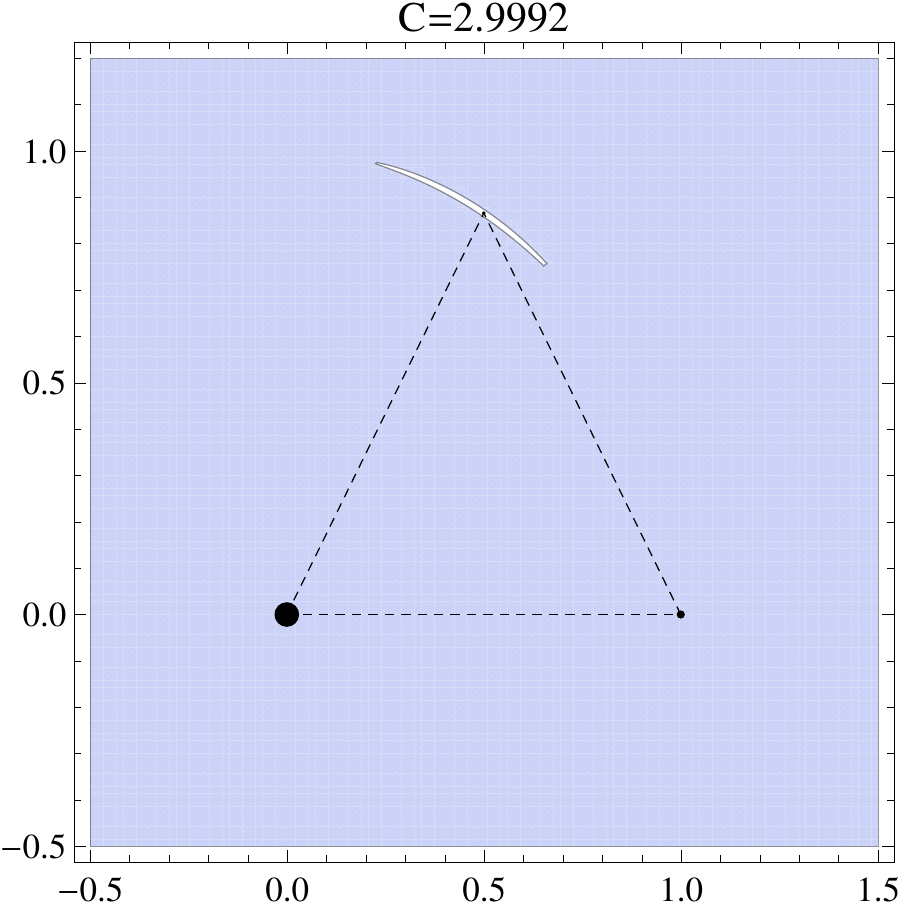}\\
   \includegraphics[width=2.35in]{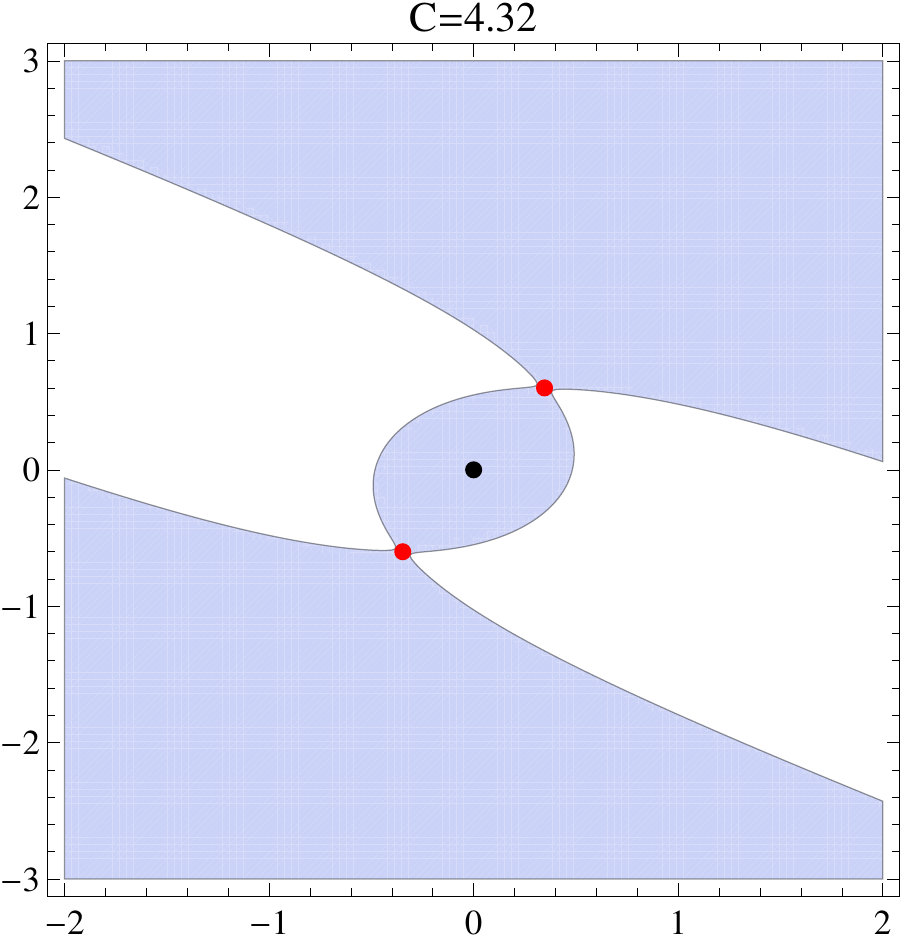}& \includegraphics[width=2.35in,height=2.48in]{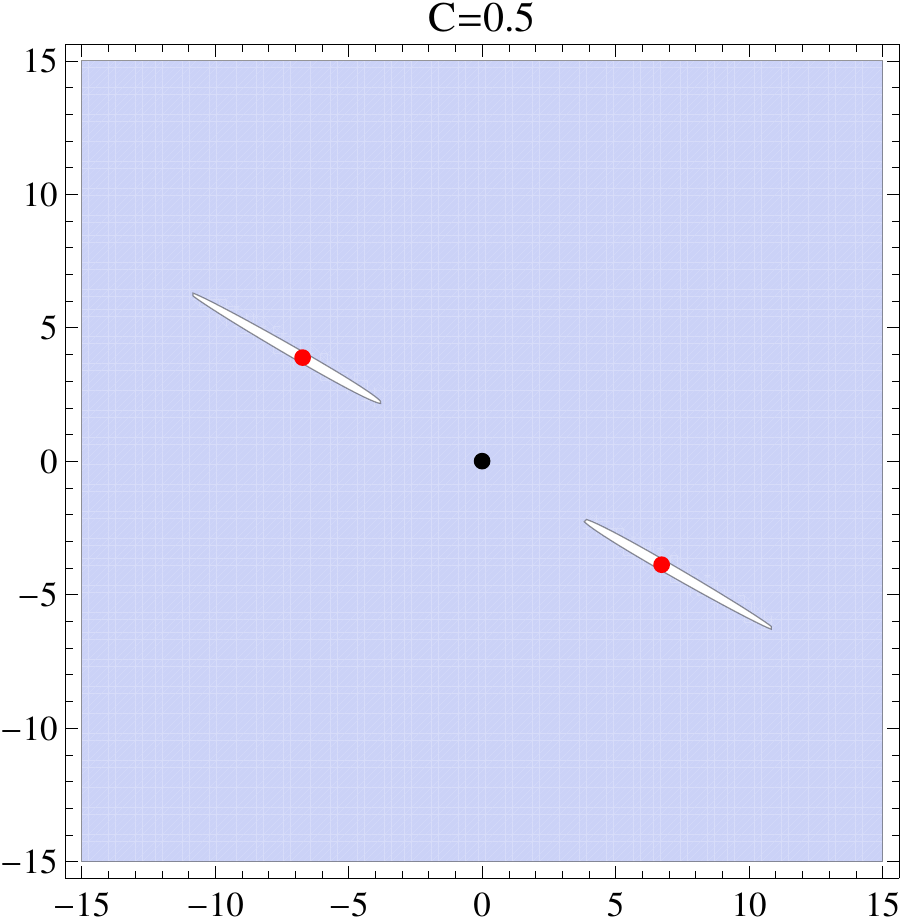}
\end{tabular}
 \caption{Hill's regions (blue areas) for $\mu= 0.00095$. First row, left to right. Hill's region for a the limit problem. Hill's region for a the full R4BP when $m_{3}=7.03\times10^{-12}$ (mass of 624 Hektor). Second row, left to right. Magnification of the first figure for the limit case and the location of the Hill's region in the R4BP. Third row.  Position of the equilibrium points (red dots) for the limit case.}\label{limithillregions}
\end{figure}

\subsection{Transformation of the equations of motion in the planar case}
\label{transformation}
As it was pointed in Remark \ref{rem1}, when we let $\mu=0$ in the Hamiltonian (\ref{hillhamiltonian}) we should  recover the Hamiltonian of the classical Hill lunar problem,  although this is not evident at first sight. However,  after applying a rotation in the $xy$-plane we obtain a new Hamiltonian with much nicer properties.

\begin{cor} \label{main corollary} The  system of equations (\ref{hillfinalequations}) is equivalent, via a rotation, with the system
\begin{equation}\begin{split}\label{finalequations}
\ddot{\bar{x}}-2\dot{\bar{y}}&=\Omega_{\bar{x}},\\
\ddot{\bar{y}}+2\dot{\bar{x}}&=\Omega_{\bar{y}},\\
\ddot{\bar{z}}&=\Omega_{\bar{z}},\end{split}\end{equation}
with
\begin{equation}\label{rotatedeffective}
\bar{\Omega}=\frac{1}{2}(\lambda_{2}\bar{x}^2+\lambda_{1}\bar{y}^2-\bar{z}^2)+
\frac{1}{\sqrt{\bar{x}^2+\bar{y}^2+\bar{z}^2}},
\end{equation}
where $\lambda_{2}$ and $\lambda_{1}$ are the eigenvalues corresponding to the rotation transformation in the $xy$-plane.
\end{cor}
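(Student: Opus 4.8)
The plan is to diagonalize the quadratic part of the effective potential \eqref{hillefectivepotential} directly on the equations of motion \eqref{hillfinalequations}, using that a (time-independent) rotation of the $(x,y)$--plane about the $z$--axis leaves every term of those equations invariant except that quadratic part. First I would write the planar part of \eqref{hillfinalequations} in the compact form $\ddot{\mathbf q}+2\mathbb J\dot{\mathbf q}=\nabla_{\mathbf q}\Omega$, with $\mathbf q=(x,y)$ and $\mathbb J=\bigl(\begin{smallmatrix}0&-1\\1&0\end{smallmatrix}\bigr)=R_{\pi/2}$, together with the separate equation $\ddot z=\Omega_z$; and I would record that \eqref{hillefectivepotential} reads $\Omega=\tfrac12\,\mathbf q^{T}A\,\mathbf q-\tfrac12 z^2+(|\mathbf q|^2+z^2)^{-1/2}$ with the symmetric matrix $A=\bigl(\begin{smallmatrix}3/4 & \frac{3\sqrt3}{4}(1-2\mu)\\ \frac{3\sqrt3}{4}(1-2\mu)& 9/4\end{smallmatrix}\bigr)$.

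Next I would substitute $\mathbf q=R_\theta\bar{\mathbf q}$, $z=\bar z$ for a rotation $R_\theta\in SO(2)$ to be determined, so that $\dot{\mathbf q}=R_\theta\dot{\bar{\mathbf q}}$ and $\ddot{\mathbf q}=R_\theta\ddot{\bar{\mathbf q}}$. Multiplying the planar equation on the left by $R_\theta^{T}$ and using $R_\theta^{T}\nabla_{\mathbf q}\Omega=\nabla_{\bar{\mathbf q}}(\Omega\circ R_\theta)$, together with the identity $R_\theta^{T}\mathbb J R_\theta=\mathbb J$ (valid because $\mathbb J=R_{\pi/2}$ commutes with every rotation), reproduces the same structure $\ddot{\bar{\mathbf q}}+2\mathbb J\dot{\bar{\mathbf q}}=\nabla_{\bar{\mathbf q}}\bar\Omega$ with $\bar\Omega:=\Omega\circ R_\theta$, while $\ddot z=\Omega_z$ is untouched. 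Since $-\tfrac12 z^2$ and $(|\mathbf q|^2+z^2)^{-1/2}$ depend on $\mathbf q$ only through $|\mathbf q|^2=|\bar{\mathbf q}|^2$, they are unchanged, and $\bar\Omega$ differs from $\Omega$ only in that its quadratic part is now $\tfrac12\bar{\mathbf q}^{T}(R_\theta^{T}AR_\theta)\bar{\mathbf q}$. Choosing $R_\theta$ to be the rotation that diagonalizes $A$ --- explicitly $\tan 2\theta=-\sqrt3\,(1-2\mu)$ --- turns this into $\tfrac12(\lambda_2\bar x^2+\lambda_1\bar y^2)$ and yields exactly \eqref{rotatedeffective}, with $\lambda_1,\lambda_2$ the eigenvalues of $A$.

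Finally I would identify those eigenvalues from $\operatorname{tr}A=3$ and $\det A=\tfrac{27}{16}\bigl(1-(1-2\mu)^2\bigr)=\tfrac{27}{4}\mu(1-\mu)$, getting $\lambda_{2,1}=\tfrac32\pm\tfrac32\sqrt{1-3\mu(1-\mu)}$, which are real and distinct throughout $\mu\in[0,1/2]$ since there $\mu(1-\mu)\le\tfrac14<\tfrac13$; in particular the diagonalizing rotation is well defined for every admissible $\mu$. As sanity checks I would note that $\mu=0$ gives $(\lambda_2,\lambda_1)=(3,0)$, recovering the effective potential of the classical planar Hill lunar problem as anticipated in Remark \ref{rem1}, and $\mu=1/2$ gives $(\lambda_2,\lambda_1)=(9/4,3/4)$ with $\theta=0$, no rotation being needed because the cross term already vanishes there. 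I do not anticipate a real obstacle; the single point that has to be verified rather than merely asserted is the invariance of the gyroscopic term, i.e. $R_\theta^{T}\mathbb J R_\theta=\mathbb J$, after which the argument reduces to the routine orthogonal normalization of a $2\times2$ quadratic form, everything else being rotation--invariant by inspection.
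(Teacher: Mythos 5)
Your proposal is correct and follows essentially the same route as the paper: diagonalize the quadratic part of the effective potential by an orthogonal change of variables in the $xy$-plane, observe that the $z$-equation and the $1/r$ term are untouched, and check that the Coriolis term keeps its form. The only real difference is in that last step: where the paper builds $C$ from explicitly computed unit eigenvectors and then verifies by direct computation that $C^{-1}J_{2}C$ has off-diagonal coefficient $a=1$, you get the invariance of the gyroscopic term for free from the fact that $\mathbb{J}$ is itself a rotation by $\pi/2$ and hence commutes with every $R_{\theta}\in SO(2)$ --- a cleaner argument that also avoids the paper's eigenvector formulas, which degenerate at $\mu=1/2$ and force a separate case there.
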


\textit{Proof}. Because of the rotation is performed in the plane, we restrict the computations to the planar case. The planar effective potential restricted to the $xy$-plane is given by the expression $$\Omega=\frac{3}{8}x^2+\frac{3\sqrt{3}}{4}(1-2\mu)xy+\frac{9}{8}y^2+\frac{1}{\sqrt{x^2+y^2}},$$ which, rewritten  in  matrix notation, becomes
\begin{equation}
\Omega=\frac{1}{2}z^{T}Mz+\frac{1}{\Vert z\Vert},
\end{equation}
where $z=(x,y)^{T}$ and  $$M=\left(\begin{array}{cc}\frac{3}{4}  & \frac{3\sqrt{3}}{4}(1-2\mu) \\\frac{3\sqrt{3}}{4}(1-2\mu)  & \frac{9}{4}\end{array}\right).$$ We notice that the matrix $M$ is symmetric, so its eigenvalues are real, the corresponding eigenvectors $v_{1}$ and $v_{2}$ are orthogonal, and the corresponding orthogonal matrix $C=col(v_{2},v_{1})$ is an isometry. We recall that a matrix is orthogonal if $C^{-1}=C^{T}$. The matrix $M$ has eigenvalues \begin{eqnarray*}\lambda_{1}&=\frac{3}{2}(1-d),\\\lambda_{2}&=\frac{3}{2}(1+d),\end{eqnarray*} with corresponding eigenvectors \begin{eqnarray*}v_{1}&=\left(\frac{1+2d}{(2\mu-1)\sqrt{3+(\frac{1+2d}{1-2\mu})^2}},\frac{\sqrt{3}}{\sqrt{3+(\frac{1+2d}{1-2\mu})^2}}\right),\\
v_{2}&=\left(\frac{1-2d}{(2\mu-1)\sqrt{3+(\frac{1-2d}{1-2\mu})^2}},\frac{\sqrt{3}}{\sqrt{3+(\frac{1-2d}{1-2\mu})^2}}\right),
\end{eqnarray*} where $d=\sqrt{1-3\mu+3\mu^2}$. The eigenvectors have been chosen such that $\Vert v_{1}\Vert=\Vert v_{2}\Vert=1$.  We notice that the above expressions are singular when $\mu=1/2$, however such expressions are no necessary for this case because the matrix $M$ is already diagonal, therefore the eigenvectors are not needed for the transformation. The equations of motion for the planar case can be written as
\begin{equation}
\ddot{z}-2J_{2}\dot{z}=Mz-\frac{z}{\Vert z\Vert^3}=0,\label{gradient}
\end{equation}

where $$J_{2}=\left(\begin{array}{cc} 0  & 1 \\ -1  & 0\end{array}\right).$$

Now we consider the linear change of variables $z=Cw$ with $w=(\bar{x},\bar{y})^{T}$, we substitute in the equation (\ref{gradient}) and multiply by $C^{-1}$ by the left, in this way we obtain
\begin{equation}
\ddot{w}-2C^{-1}J_{2}C\dot{w}=C^{-1}MCw-\frac{C^{-1}Cw}{\Vert Cw\Vert^3}=0.\label{gradientdiagonal}
\end{equation}
It is easy to see that $D=C^{-1}MC$, where $D$ is given by the diagonal matrix $$D=\left(\begin{array}{cc}\lambda_{2}  & 0 \\0 &\lambda_{1}\end{array}\right),$$ and $\Vert Cw\Vert^3=\Vert w\Vert^3$ because $C$ is a isometry. Therefore the equation (\ref{gradientdiagonal}) becomes $$\ddot{w}-2C^{-1}J_{2}C\dot{w}=Dw-\frac{w}{\Vert w\Vert^3}=0.$$ Now, if we denote $v_{1}=(v_{11},v_{12})^{T}$ and $v_{2}=(v_{21},v_{22})^{T}$, after some computations we obtain
$$
C^{-1}J_{2}C=\left(\begin{array}{cc}0  & a \\-a &0\end{array}\right),
$$
where $a=v_{21}v_{12}-v_{22}v_{11}$. Because of $v_{1}\cdot v_{2}=0$, we obtain $v_{21}=-\frac{v_{12}v_{22}}{v_{11}}$ so we can write $a=-\frac{v_{22}}{v_{11}}\Vert v_{1}\Vert^{2}$, but $\Vert v_{1}\Vert=1$, so $$a=-\frac{v_{22}}{v_{11}}.$$ It is easy to see that $v_{11}\ne0$ for $\mu\in[0,1/2)$. Therefore the equation (\ref{gradientdiagonal}) becomes

$$
\ddot{w}-2aJ_{2}\dot{w}=Dw-\frac{w}{\Vert w\Vert^3}=0.
$$
It is not difficult to see that the coefficient $a$ becomes
$$a=\frac{\sqrt{3}(1-2\mu)}{1+2d}\left(-\frac{(1-2\mu)^2}{1-4d^2}\right)^{1/2}=\sqrt{3}\left(\frac{(1-2\mu)^2}{3(1-2\mu)^2}\right)^{1/2}=1.
$$

Therefore, the change of coordinates is symplectic. For each $\mu\in[0,1/2)$ we obtain the equations
\begin{equation}\begin{split}\label{finalequations}
\ddot{\bar{x}}-2\dot{\bar{y}}=\bar{\Omega}_{\bar{x}},\\
\ddot{\bar{y}}+2\dot{\bar{x}}=\bar{\Omega}_{\bar{y}},\end{split}
\end{equation}
with
\begin{equation}
\bar{\Omega}=\frac{1}{2}(\lambda_{2}\bar{x}^2+\lambda_{1}\bar{y}^2)+\frac{1}{\Vert w\Vert}.\label{rotatedpotential}
\end{equation}

For the special case when $\mu=0$, the eigenvalues and eigenvectors are \begin{equation*}\begin{split}\lambda_{1}=0,\\ \lambda_{2}=3,\end{split}\end{equation*} with respective eigenvectors
\begin{equation*}\begin{split}v_{1}&=\left(-{\sqrt{3}}/{2},{1}/{2}\right)\\ v_{2}&=\left({1}/{2},\sqrt{3}/{2}\right).
\end{split}\end{equation*}
It is easy to see that the matrix $C$ is a rotation with angle $\pi/3$ when $\mu=0$. Therefore the equations (\ref{finalequations}) take  the form
\begin{equation*}\begin{split}
\ddot{\bar{x}}-2\dot{\bar{y}}&=\bar{\Omega}_{\bar{x}},\\
\ddot{\bar{y}}+2\dot{\bar{x}}&=\bar{\Omega}_{\bar{y}},\end{split}\end{equation*}
with $$\bar{\Omega}=\frac{3}{2}\bar{x}^2+\frac{1}{\Vert w\Vert}$$ exactly as in the classical Hill problem. \qed

Therefore   the above system is an extension of the classical Hill  lunar problem. In order to simplify the notation, we are going to omit the bars for $x$ and $y$. From the expressions for $\Omega_{x}$ and $\Omega_{y}$ we can notice the following properties
\begin{eqnarray*}
\Omega_{x}(x,-y)&=&  \Omega_{x}(x,y),\\
\Omega_{y}(x,-y)&=&- \Omega_{x}(x,y).
\end{eqnarray*}
Using these properties it easy to see that the equations (\ref{finalequations}) are invariant under the transformations $x \rightarrow x$, $y \rightarrow -y$, $\dot{x} \rightarrow -\dot{x}$, $\dot{y} \rightarrow \dot{y}$, $\ddot{x} \rightarrow \ddot{x}$, $\ddot{y} \rightarrow -\ddot{y}$ as a consequence we have the well known symmetry respect the $x$-axis. In fact a similar argument shows that the equations (\ref{finalequations}) are also symmetric respect the $y$-axis.

Now we conclude  that the Hamiltonian in these new coordinates is given by the expression
\begin{equation}\begin{split}
H(x,y,z,p_x,p_y,p_z)=&\frac{1}{2}(p_x^2+p_y^2+p_z^2)+yp_x-xp_y+ax^2+by^2+cz^2\\&-\frac{1}{\sqrt{x^2+y^2+z^2}},
\label{rotatedspatialhamiltonian}
\end{split}
\end{equation}
where $a=(1-\lambda_{2})/2$, $b=(1-\lambda_{1})/2$ and $c=1/2$.

\section{The equilibrium points of the system.}
\label{equilibriumpoints}
\subsection{Computation of the equilibrium points.}
In the previous section we saw that the special case $\mu=0$ corresponds exactly to the classical Hill problem. It is well known that such problem possesses two saddle-center type equilibrium points \cite{Sz}, so in this section we will focus in the case when $\mu\in(0,1/2]$. We will prove that the system has 4 equilibrium points that can be computed explicitly in terms of the mass parameter $\mu$. In order to find the equilibrium points of the limit case, as usual, we need to find the critical points of the effective potential \eqref{rotatedeffective}; an easy computation shows that \[\Omega_{z}=-z(1+\frac{1}{r^3}),\] the equation $\Omega_{z}=0$ implies that $z=0$ so the equilibrium points of the system are coplanar. Therefore, it is enough to study the critical points of the planar effective potential
\[\Omega=\frac{1}{2}(\lambda_{2}x^2+\lambda_{1}y^2)+\frac{1}{\sqrt{x^2+y^2}}.\]
After computing the first partial derivatives we have to solve the equations
\begin{equation*}\begin{split}
\Omega_{x}=&\left(\lambda_{2}-\frac{1}{(x^2+y^2)^{3/2}}\right)x=0,\\
\Omega_{y}=&\left (\lambda_{1}-\frac{1}{(x^2+y^2)^{3/2}}\right)y=0.
\end{split}
\end{equation*}

The case $x=y=0$ corresponds to a singularity and the case $x\ne0$, $y\ne0$ yields a contradiction. Therefore when $y=0$ we have $(x^2)^{3/2}=\lambda^{-1}_{2}$ or equivalently $$\vert x\vert=\frac{1}{\sqrt[3]{\lambda_{2}}},$$
on the other hand, when $x=0$ we have $(y^2)^{3/2}=\lambda^{-1}_{1}$  or equivalently $$\vert y\vert=\frac{1}{\sqrt[3]{\lambda_{1}}},$$
therefore we obtain four equilibrium points given by
\begin{eqnarray*}
L_{1}=\left(\frac{1}{\sqrt[3]{\lambda_{2}}},0\right),
L_{2}=\left(-\frac{1}{\sqrt[3]{\lambda_{2}}},0\right),
L_{3}=\left(0,\frac{1}{\sqrt[3]{\lambda_{1}}}\right),
L_{4}=\left(0,-\frac{1}{\sqrt[3]{\lambda_{1}}}\right),
\end{eqnarray*}

We remark that the presence of a second massive body perturbing the system produces two additional equilibrium points to those of the classical Hill problem. When $\mu\to 0$, $\lambda_1\to 0$ hence $L_{3},L_{4}$ are sent to infinity.
The stability of  $L_{3}$ and $L_{4}$ depends on the value of the parameter $\mu$ as we will see in the next section.

\subsection{Study of the stability of the equilibrium points.}
In the previous subsection we obtained explicit expressions of the four equilibrium points in terms of the parameter $\mu$, so  we can analyze the linear stability in the whole range $\mu\in[0,1/2]$. We will perform such analysis for the
planar case $z=0$. As usual, we need to study the linear system $\boldmath{\dot{\xi}}=\bf{A}\bf{\xi}$, where $\bf{\xi}\rm=(x,y,\dot{x},\dot{y})^{T}$ and $A$ is the matrix

\begin{equation}
\left(\begin{array}{cccc}0 & 0 & 1 & 0 \\0 & 0 & 0 & 1 \\\Omega_{xx} & \Omega_{xy} & 0 & 2 \\\Omega_{xy} & \Omega_{yy} & -2 & 0\end{array}\right)\label{matrixlinearization}
\end{equation}
where the partial derivatives are given by the expressions \begin{equation*}\begin{split}\Omega_{xx}&=\lambda_{2}+\frac{3x^2}{(x^2+y^2)^{5/2}}-\frac{1}{(x^2+y^2)^{3/2}},\\ \Omega_{yy}&=\lambda_{1}+\frac{3y^2}{(x^2+y^2)^{5/2}}-\frac{1}{(x^2+y^2)^{3/2}},\\ \Omega_{xy}&=\frac{3xy}{(x^2+y^2)^{5/2}},\end{split}\end{equation*} and they need to be evaluated at each $L_{i}$ for $i=1,2,3,4.$ Because of the symmetries of the equilibrium points, we just need to study the equilibrium points $L_{1}$ and $L_{3}$. The characteristic polynomial of the matrix (\ref{matrixlinearization}) is given by the expression

\begin{equation}
p(\lambda)=\lambda^4+A\lambda^2+B,
\end{equation}
where $A=4-\Omega_{xx}-\Omega_{yy}$, $B=\Omega_{xx}\Omega_{yy}-\Omega_{xy}^2$. Therefore the four eigenvalues of the matrix (\ref{matrixlinearization}) at each point are given by  $$\Lambda_{1,2,3,4}=\pm\frac{1}{\sqrt{2}}\sqrt{-A\pm\sqrt{D}},$$ with $D=A^2-4B$. In the Fig. \ref{stabilityL1} we can observe the behavior of  $A$, $B$ and $D$ as functions of the parameter $\mu$.

\begin{figure}
  \centering
  \begin{tabular}{cc}
  \includegraphics[width=2.35in]{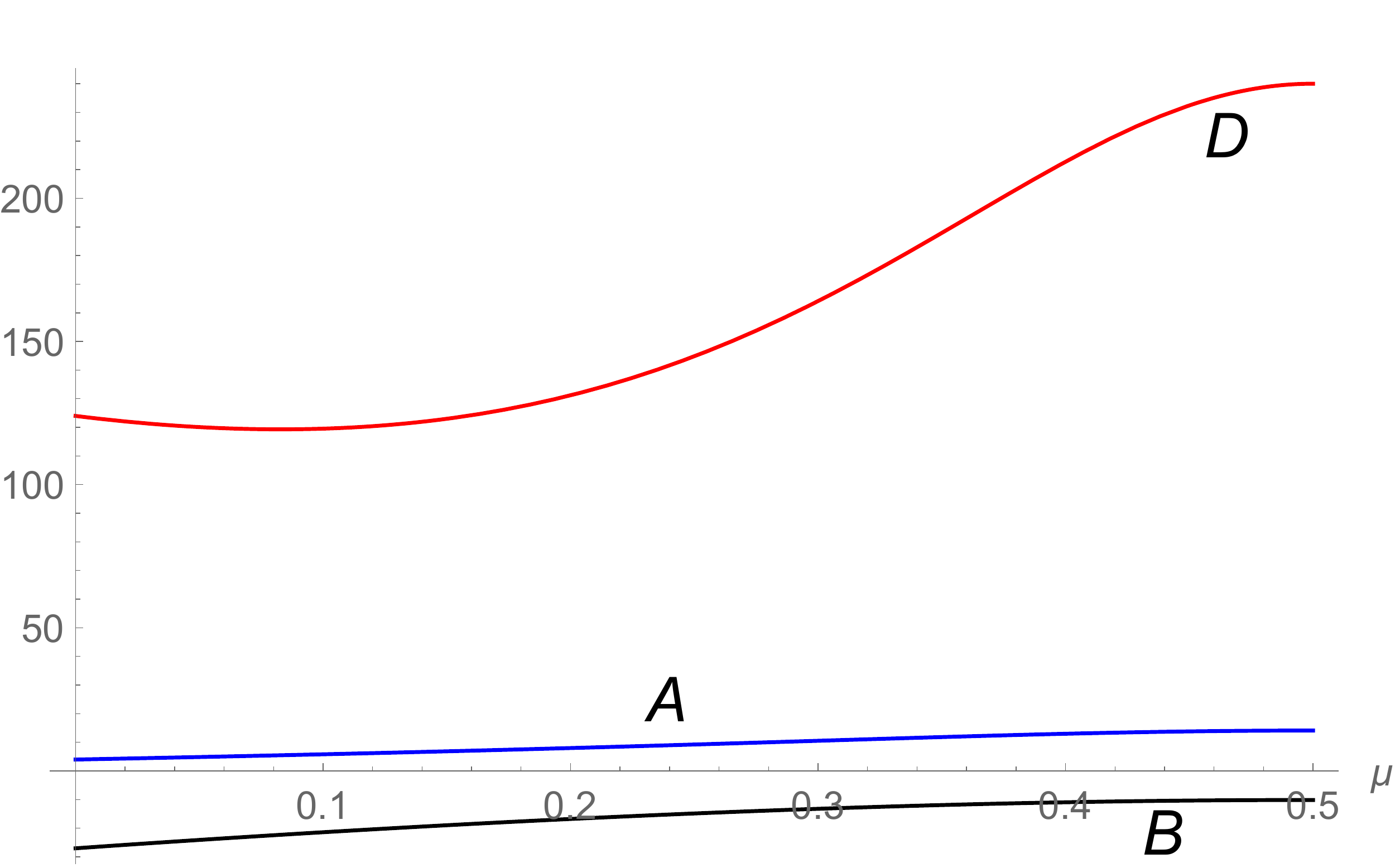}&
  \includegraphics[width=2.35in]{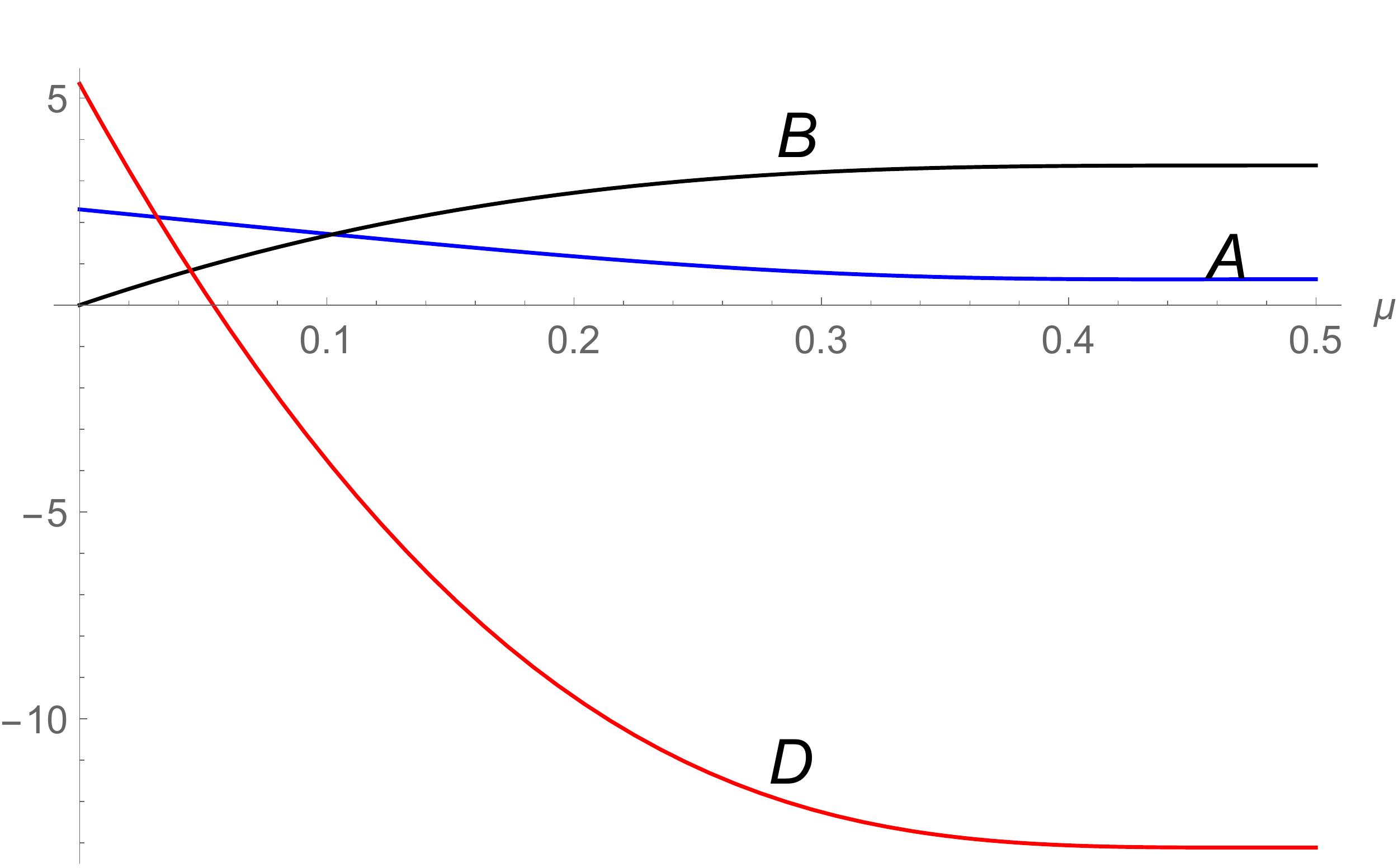}
  \end{tabular}
 \caption{Left. The coefficients $A$ (in blue), $B$ (in black) and $D$ (in red) for the equilibrium point $L_{1}$ as functions of the mass parameter $\mu$. Right. The coefficients $A$, $B$ and $D$ for the equilibrium point $L_{3}$.}
\label{stabilityL1}
\end{figure}

A equilibrium point is linearly stable if only if  $A$, $B$ and $D$ are non-negative. For the points $L_{1}$ and $L_{2}$ we have
\begin{equation*}\begin{split}\Omega_{xx}&=3\lambda_{2},\\
\Omega_{yy}&=\lambda_{1}-\lambda_{2},\\\Omega_{xy}&=0.\end{split}\end{equation*} For this case the coefficient $B$ is given by $B=-\frac{27}{2}(1+d)d$. It can be easily verified that $d=d(\mu)$ is a decreasing function between $1/2\le d\leq1$ when $\mu\in[0,1/2]$. Therefore the coefficient is always negative and consequently the equilibrium points $L_{1}$ and $L_{2}$ are unstable, in fact, if we apply the argument shown on page 33 of \cite{Burgosthesis},
which is equivalent to analyze the sign of the discriminant $D$ we obtain the following:

\begin{prop} The coefficient $B$ is negative for $\mu\in[0,1/2]$ so the equilibrium points $L_{1}$ and $L_{2}$ are unstable for this range of values of the mass parameter, in fact, the eigenvalues are given by $\pm\Lambda$ and $\pm\textit{i}\omega$ with $\Lambda>0$ and  $\omega>0$.
\end{prop}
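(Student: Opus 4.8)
The plan is to compute the characteristic polynomial coefficients $A$, $B$, $D$ explicitly at $L_1$ (by symmetry $L_2$ gives the same values) and show that $B<0$ throughout $\mu\in[0,1/2]$, which already forces instability; the extra information about the eigenvalue structure then follows from a sign analysis of $D=A^2-4B$. First I would substitute the equilibrium $L_1=(\lambda_2^{-1/3},0)$ into the second-derivative formulas from the excerpt. Since at $L_1$ we have $y=0$ and $x^2+y^2=\lambda_2^{-2/3}$, one gets $(x^2+y^2)^{3/2}=\lambda_2^{-1}$ and $(x^2+y^2)^{5/2}=\lambda_2^{-5/3}$, so $\Omega_{xx}=\lambda_2+3\lambda_2^{5/3}\cdot\lambda_2^{-5/3}-\lambda_2 = 3\lambda_2$ — wait, more carefully: $3x^2/(x^2+y^2)^{5/2} = 3\lambda_2^{-2/3}/\lambda_2^{-5/3}=3\lambda_2$, and $1/(x^2+y^2)^{3/2}=\lambda_2$, giving $\Omega_{xx}=\lambda_2+3\lambda_2-\lambda_2=3\lambda_2$; similarly $\Omega_{yy}=\lambda_1+0-\lambda_2=\lambda_1-\lambda_2$, and $\Omega_{xy}=0$ because $x y=0$. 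These are exactly the values quoted just before the proposition, so that part is bookkeeping.

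Next I would compute $B=\Omega_{xx}\Omega_{yy}-\Omega_{xy}^2=3\lambda_2(\lambda_1-\lambda_2)$ and plug in $\lambda_1=\tfrac32(1-d)$, $\lambda_2=\tfrac32(1+d)$ with $d=\sqrt{1-3\mu+3\mu^2}$. Then $\lambda_1-\lambda_2=-3d$ and $3\lambda_2=\tfrac92(1+d)$, so $B=-\tfrac{27}{2}(1+d)d$, matching the stated formula. To conclude $B<0$ on all of $[0,1/2]$ it suffices to show $d>0$ there, i.e. that $1-3\mu+3\mu^2>0$; the discriminant of this quadratic in $\mu$ is $9-12=-3<0$, so it is strictly positive for every real $\mu$, hence certainly on $[0,1/2]$, and $1+d>0$ is automatic. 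Therefore $B<0$ on the whole range. Since $B$ is the product of the four eigenvalues (constant term of $p(\lambda)=\lambda^4+A\lambda^2+B$), a negative $B$ rules out the linearly stable case (which requires $B\ge 0$), so $L_1$ and $L_2$ are unstable for all $\mu\in[0,1/2]$.

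For the refined statement that the eigenvalues are $\pm\Lambda$, $\pm i\omega$ with $\Lambda,\omega>0$, I would argue as follows. Writing $w=\lambda^2$, the roots of $w^2+Aw+B=0$ have product $B<0$, so they are two real numbers of opposite sign, say $w_+>0>w_-$; this is precisely the content of "$D=A^2-4B>0$" (which is forced since $A^2\ge 0$ and $-4B>0$, so $D>0$ automatically and the two $w$-roots are real and distinct). From $w_+>0$ we get a real pair $\lambda=\pm\Lambda$ with $\Lambda=\sqrt{w_+}>0$, and from $w_-<0$ we get a purely imaginary pair $\lambda=\pm i\omega$ with $\omega=\sqrt{-w_-}>0$. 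This is the center-saddle (saddle$\times$center) structure claimed. The only mild subtlety — and the place I would be most careful — is making sure the radicands are handled correctly so that $\Lambda$ and $\omega$ are genuinely nonzero (i.e. $w_\pm\neq 0$), but this is immediate from $B\neq 0$: if either $w$-root vanished then their product $B$ would vanish, contradiction. So there is really no serious obstacle here; the proposition is a short computation plus the elementary observation that $1-3\mu+3\mu^2$ has negative discriminant and is therefore strictly positive.
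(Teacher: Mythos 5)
Your proposal is correct and follows essentially the same route as the paper: evaluate $\Omega_{xx}=3\lambda_2$, $\Omega_{yy}=\lambda_1-\lambda_2$, $\Omega_{xy}=0$ at $L_1$, obtain $B=-\tfrac{27}{2}d(1+d)<0$, and read off the saddle--center spectrum from the sign pattern of the roots of $w^2+Aw+B$. The only differences are cosmetic: you justify $d>0$ via the negative discriminant of $1-3\mu+3\mu^2$ (the paper instead notes $d$ decreases from $1$ to $1/2$ on $[0,1/2]$), and you write out explicitly the eigenvalue-structure argument that the paper delegates to a reference, observing correctly that $B<0$ already forces $D=A^2-4B>0$.
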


For the points $L_{3}$ and $L_{4}$ we have
\begin{equation*}\begin{split}\Omega_{xx}&=\lambda_{2}-\lambda_{1},\\
\Omega_{yy}&=3\lambda_{1},\\
\Omega_{xy}&=0.\end{split}\end{equation*}
For this case the coefficients $A$ and $B$ are $A=\frac{3d-1}{2}$ and $B=\frac{27}{2}(1-d)d$, because $1/2\le d\leq1$ we see that these coefficients are non-negative for $\mu\in[0,1/2]$. The discriminant $D$ is given by the second order polynomial in the variable $d$, $D=\frac{225}{4}d^2-\frac{222}{4}d+\frac{1}{4}$. It is easy to verify that $D$ changes from negative to positive when $1/2\le d\leq1$. Because of the continuity of $D$ as a function of $d$, there exists $d_{0}$ such that $D(d_{0})=0$. We must recall that $d$ depends on $\mu$ also, therefore we have the following

\begin{prop} There exists a value $\mu_{0}$ such that $D=0$, as a consequence, the equilibrium points $L_{3}$ and $L_{4}$ have the following properties: for $\mu\in(0,\mu_{0})$ their eigenvalues are $\pm\textit{i}\omega_{1}$ and $\pm\textit{i}\omega_{2}$, for $\mu=\mu_{0}$ we have a pair of the eigenvalues $\pm\textit{i}\omega$ of multiplicity 2, finally when $\mu\in(\mu_{0},1/2]$ the eigenvalues are $\pm\alpha\pm\textit{i}\omega$ with $\alpha>0$ and $\omega>0$.
\end{prop}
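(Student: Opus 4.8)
The plan is to read off the complete spectral picture at $L_{3}$ --- and hence at $L_{4}$, by the $y\mapsto-y$ symmetry of the equations of motion --- from the biquadratic characteristic polynomial $p(\lambda)=\lambda^{4}+A\lambda^{2}+B$ together with the closed forms already obtained above, $A=\tfrac{3d-1}{2}$, $B=\tfrac{27}{2}d(1-d)$, $D=A^{2}-4B=\tfrac14(225d^{2}-222d+1)$, in which $d=d(\mu)=\sqrt{1-3\mu+3\mu^{2}}$. Putting $t=\lambda^{2}$, the four eigenvalues are the two $\pm$-pairs of square roots of the roots $t_{\pm}=\tfrac12\bigl(-A\pm\sqrt{D}\bigr)$ of $t^{2}+At+B=0$, so the whole statement reduces to how the three real numbers $A,B,D$ depend on $\mu$.

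First I would collect the monotonicity and sign facts. On $[0,\tfrac12]$ the function $d(\mu)$ is continuous and strictly decreasing (its derivative is $(6\mu-3)/(2d)\le0$), with $d(0)=1$ and $d(\tfrac12)=\tfrac12$, hence it is an orientation-reversing bijection of $[0,\tfrac12]$ onto $[\tfrac12,1]$ and $d\in[\tfrac12,1)$ for $\mu\in(0,\tfrac12]$. On that range $d\ge\tfrac12>\tfrac13$ forces $A\ge\tfrac14>0$, and $0<d<1$ forces $B>0$; the crucial consequence is that when $D\ge0$ the roots $t_{\pm}$ are real with sum $-A<0$ and product $B>0$, so both are strictly negative and all four eigenvalues lie on the imaginary axis, whereas when $D<0$ the roots $t_{\pm}=-\tfrac A2\pm\tfrac i2\sqrt{-D}$ are genuinely nonreal with negative real part. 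It then remains only to locate the sign change of $D$. Viewed as a quadratic in $d$, $225d^{2}-222d+1$ has roots $d_{\mp}=\tfrac1{450}(222\mp48\sqrt{21})$, and one checks $d_{-}<\tfrac12<d_{+}<1$; hence on $[\tfrac12,1)$ one has $D<0$ on $[\tfrac12,d_{+})$, $D(d_{+})=0$, and $D>0$ on $(d_{+},1)$. Composing with the decreasing bijection $\mu\mapsto d$ produces a unique $\mu_{0}\in(0,\tfrac12)$ with $d(\mu_{0})=d_{+}$, namely the smaller root of $3\mu_{0}^{2}-3\mu_{0}+(1-d_{+}^{2})=0$ (which determines $\mu_{0}$ explicitly), and $D>0$, $D=0$, $D<0$ according as $\mu$ lies in $(0,\mu_{0})$, equals $\mu_{0}$, or lies in $(\mu_{0},\tfrac12]$.

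Finally I would translate the three regimes into the asserted eigenvalue structure. For $\mu\in(0,\mu_{0})$, $D>0$ makes $t_{+}\ne t_{-}$ distinct negative reals, so the spectrum is $\{\pm i\omega_{1},\pm i\omega_{2}\}$ with $\omega_{1}\ne\omega_{2}$ and $\omega_{j}=\sqrt{-t_{\pm}}>0$. For $\mu=\mu_{0}$, $t_{+}=t_{-}=-A/2<0$, giving a purely imaginary pair $\pm i\omega$ of algebraic multiplicity two. For $\mu\in(\mu_{0},\tfrac12]$, $t_{\pm}$ are nonreal complex conjugates with $\operatorname{Re}t_{\pm}<0$; writing $t_{+}=|t_{+}|e^{i\theta}$ with $\theta\in(\tfrac\pi2,\pi)$, its square roots are $\pm|t_{+}|^{1/2}e^{i\theta/2}$ with $\theta/2\in(\tfrac\pi4,\tfrac\pi2)$, i.e.\ of the form $\pm(\alpha+i\omega)$ with $\alpha,\omega>0$, while the square roots of $t_{-}=\overline{t_{+}}$ are the conjugates $\pm(\alpha-i\omega)$; since $t_{\pm}$ are not real, neither a purely real nor a purely imaginary square root can occur, so $\alpha$ and $\omega$ are both strictly positive and the spectrum is the complex-saddle quadruple $\{\pm\alpha\pm i\omega\}$. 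I expect the only genuine work to be in the second paragraph --- establishing that $D$ has exactly the claimed sign pattern on $d\in[\tfrac12,1)$ and pairing it correctly with the monotonicity of $d(\mu)$ --- together with the complex-square-root bookkeeping in the last case, which is what guarantees that in the regime $D<0$ both $\alpha$ and $\omega$ are nonzero rather than one of them degenerating to a center or a real saddle.
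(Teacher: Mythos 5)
Your proof is correct and follows essentially the same route as the paper: compute $A=\tfrac{3d-1}{2}\ge\tfrac14$ and $B=\tfrac{27}{2}d(1-d)>0$ at $L_3$, locate the sign change of $D=\tfrac14(225d^2-222d+1)$ on $d\in[\tfrac12,1)$, and pull back through the decreasing map $\mu\mapsto d(\mu)$; you simply supply details the paper leaves implicit (uniqueness of $\mu_0$ via monotonicity, and the square-root bookkeeping showing $\alpha,\omega>0$ in the complex-saddle regime rather than a degenerate center or real saddle). One caveat worth flagging: your explicit root $d_+=\tfrac{1}{450}(222+48\sqrt{21})\approx0.9821$ yields $\mu_0\approx0.0119$ via $3\mu_0^2-3\mu_0+(1-d_+^2)=0$, which is inconsistent with the value $\mu_0\approx0.00899$ quoted after the proposition --- this is a discrepancy internal to the paper (its closed form for $\mu_0$ does not annihilate its own formula for $D$), not a flaw in your argument for the proposition as stated.
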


By solving two quadratic equations, one for $D=0$ and other one for $d_{0}^2=1-3\mu+3\mu^2$ we obtain the value
$$
\mu_{0}=\frac{1}{224}\left(112-\sqrt{2(1979+37\sqrt{12097})}\right),
$$
which is approximately $\mu_{0}\approx 0.00898964$. In the  case of the solar system $\mu\in[0,0.00095]$ therefore the equilibrium points $L_{3}$ and $L_{4}$ are always linearly stable.

\section{Numerical explorations}

\subsection{Poincar\'e sections}
We first explore numerically the planar Hill problem $z=0$ by investigating the Poincar\'e sections and their dependence on the parameters  mass ratio $\mu$ and energy level. To do this, we first first regularize the collisions of the infinitesimal mass with $m_3$  via the Levi-Civita procedure.

\subsubsection{Regularization of the planar problem}
The Hamiltonian of the planar system \eqref{rotatedspatialhamiltonian} is
\[H(x,y,p_x,p_y)=\frac{1}{2}(p_x^2+p_y^2)+yp_x-xp_y+ax^2+by^2-\frac{1}{\sqrt{x^2+y^2}}.\]

In the sequel we consider the planar version of the problem ($z=0$) and
remove the singularity at the origin
using the Levi-Civita transformation.  The  Levi-Civita procedure  consists in changing the coordinates and  the conjugate momenta and in rescaling the time, as follows:
\[\left(\begin{array}{c} x \\ y \\ \end{array} \right)\longrightarrow\left(
\begin{array}{rr}  x & - y \\  y &  x \\\end{array} \right)
\left(\begin{array}{c}  x \\  y \\ \end{array} \right),\]
\[\left(\begin{array}{c} p_x \\ p_y \\ \end{array} \right)\longrightarrow\frac{2}{ x^2+ y^2}\left(
\begin{array}{rr}  x & - y \\  y &  x \\\end{array} \right)
\left(\begin{array}{c}  p_x \\  p_y \\ \end{array} \right),\]
and \[d\tau\longrightarrow\frac{4}{ x^2+ y^2}dt.\]

We recall that the  Levi-Civita transformation determines a double covering of the phase space.
The transformed Hamiltonian is \[\hat H(x,y)=\frac{( x^2+ y^2)^{1/2}}{4}(H(x,y)-h),\] where $h$ is the value of the non-regularized Hamiltonian $H$, and, with an abuse of notation, $(x,y,p_x,p_y)$ denote the transformed variables.

We obtain the following expression for $\hat H=\hat H(x,y,p_x,p_y)$:
\begin{equation}\begin{split}\hat H=&\left(-\frac{h}{2}\right)\left (\frac{x^2+y^2}{2} \right)+\frac{1}{2}(p_x^2+p_y^2)+ \left (\frac{x^2+y^2}{2} \right)(yp_x-xp_y)\\
&+\frac{1}{8}\left (ax^6+(4b-a)x^4y^2+(4b-a)x^2y^4+ay^6 \right) -\frac{1}{4} .
\end{split}\label{eqn:hillreg1}\end{equation}
We can omit from $\hat H$ the constant $-1/4$ (which implies that an $h$-level set  of the Hamiltonian  $H$ corresponds to a  $(1/4)$-level set of the Hamiltonian $\hat H$).

Since $\hat H$ depends on the value $h$ of the non-regularized Hamiltonian, we eliminate it through a canonical change of variables
\begin{equation}\label{gamma} x=\alpha X,\, y=\alpha Y,\, p_x=\beta P_x,\, p_y=\beta P_y,\, \hat H=\gamma\check H\end{equation}
where $\alpha =2(-h/2)^{1/4}$, $\beta=2(-h/2)^{3/4}$ and $\gamma=4(-h/2)^{3/2}$. This transformation is valid when  $h<0$; when $h>0$ we can use the same scaling with $h$ instead of $-h$. Thus, both $h$ and $-h$ correspond to the same value $\check h$ of the new Hamiltonian $\check H$.
When $h\to 0$ the value of the  Hamiltonian $\check H$ approaches $+\infty$.

The new, regularized  Hamiltonian $\check H=\check H(X,Y,P_X,P_Y)$ is given by
\begin{equation}\begin{split}\check H=&\frac{1}{2}(X^2+Y^2+P_X^2+P_Y^2)+2(X^2+Y^2)(YP_X-XP_Y)\\
&+2\left ( aX^6+(4b-a)X^4Y^2+(4b-a)X^2Y^4+aY^6\right).
\end{split}\label{eqn:hillreg2}\end{equation}
From \eqref{gamma} we have that the  Jacobi constant $C_h=-2h$ is related to the energy level  $\check h$ of the regularized Hamiltonian $\check H$ by
\begin{equation}\label{energyvsjac}\check h =(|C_h|^{-3/2})/2.\end{equation}

Note that in the special case $\mu=0$ we have $\lambda_1=0$, $\lambda_2=3$, $a=-2$, $b=1$, and the regularized Hamiltonian is
\begin{equation}\begin{split}\check H=&\frac{1}{2}(X^2+Y^2+P_X^2+P_Y^2)+2(X^2+Y^2)(YP_X-XP_Y)\\
&-4\left ( X^6-3X^4Y^2-3X^2Y^4+Y^6\right),
\end{split}\label{eqn:hillreg2}\end{equation}
which is the same as for  classical Hill lunar problem  in Levi-Civita regularized coordinates \cite{Simo}.

The corresponding Hamilton equations  to \eqref{eqn:hillreg2} are
\begin{equation}\begin{split}
\dot X&=P_X+2(X^2+Y^2)Y,\\
\dot Y&=P_Y-2(X^2+Y^2)X,\\
\dot P_X&=-X+2(X^2+Y^2)P_Y-4X(YP_X-XP_Y)\\&-\left [12aX^5+8(4b-a)X^3Y^2+4(4b-a)XY^4\right],\\
\dot P_Y&=-Y-2(X^2+Y^2)P_X-4Y(YP_X-XP_Y)\\&-\left[4(4b-a)X^4Y+8(4b-a)X^2Y^3+12aY^5\right].
\end{split}\label{eqn:hillreg2}\end{equation}

We remark   that $\check H$ depends on the mass parameter $\mu$.  The first two terms of $\check H$, consisting of a homogeneous polynomial of degree $2$ and a
homogeneous polynomial of degree $4$ in $(X,Y,P_X,P_Y)$, correspond to a Hamiltonian system describing the motion of two uncoupled oscillators perturbed by a Coriolis force. It is Liouville-Arnold integrable. The third term,
consisting in a homogeneous polynomial of degree $6$, represents the perturbation  due to the two main bodies (e.g., Sun and Jupiter). The numerical experiments, shown  below, suggest that this term makes the system non-integrable, as they reveal the well known KAM phenomena. The non-integrability of the Hill problem in the lunar case, corresponding to $\mu=0$,  has been proved in \cite{Meletlidou,Winterberg,MoralesRuiz}.

\subsubsection{Poincar\'e sections for various mass ratios and energy levels}

We explore numerically the dynamics of the regularized Hamiltonian $\check H$ given by \eqref{eqn:hillreg2}.
We compute the first return map to the Poincar\'e section given  by $Y=0$, $P_Y>0$, for various choices of mass ratios $\mu$ and Jacobi constants $C$, which is related to the energy level $\check h$ of $\check H$ by \eqref{energyvsjac}.

In Fig. \ref{hill.eps} we show  the Poincar\'e sections for the Hill restricted four-body problem  with $\mu=0$, at the energy level  $C=4.329636$, which corresponds to the `classical' lunar Hill problem \cite{Simo}.
This is the energy level at which a stochastic `maple leaf' shaped region appears.

\begin{figure}
\centering
\includegraphics[width=0.4\textwidth]{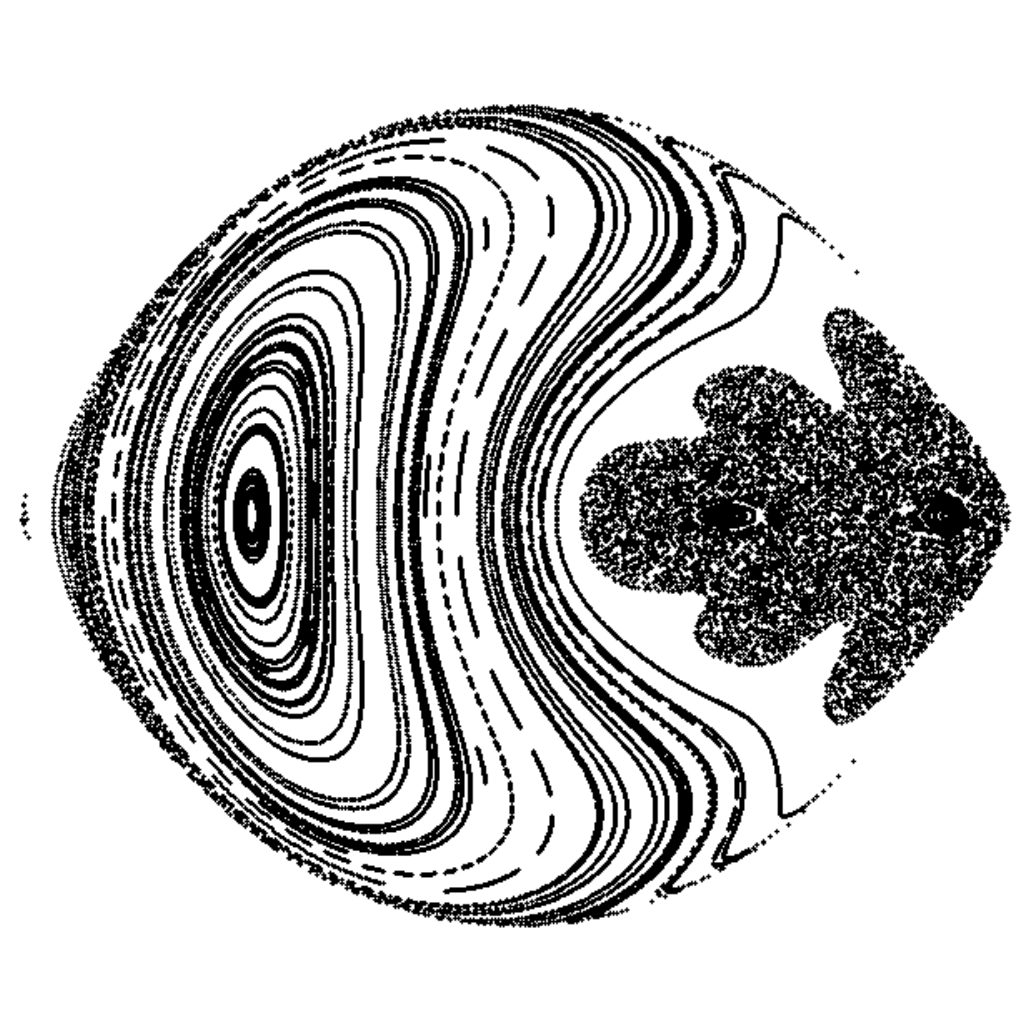}
\caption{Poincar\'e sections for the Hill restricted four-body problem $C = 4.329636$ and $\mu=0$.}
\label{hill.eps}
\end{figure}

In Fig. \ref{hill_mu01} we show the Poincar\'e sections for  the Hill restricted four-body problem with $\mu=0.1$ for various values of the  Jacobi constant  $C$. For small   there are two stable fixed points, one on the left corresponding to retrograde motion, and  one on the right corresponding to direct motion. As $C$ is decreased  the fixed point on the right undergoes a pitchfork bifurcation, so is becoming unstable and  two other stable fixed points appear. After this, the region on the right becomes more and more chaotic and a `maple leaf'  region surrounding elliptic islands appears. When $C$ is decreased even further, the zero velocity curve bounding the  region on the right opens up and direct orbits  escape to the exterior region.

This type of pitchfork bifurcation occurs for  all mass ratios. As an example, in figure (\ref{bifurcation}) we show the pitchfork bifurcation of the family of direct periodic orbits around the tertiary for the mass parameter $\mu=0.00095$ on the plane $(C,x_{0})$, where $x_{0}$ stands for the positive intersection of the orbit with the $x-$axis. The bifurcation occurs approximately for the periodic orbit corresponding to $(C,x_{0})=(4.4983599991, 0.2836529981)$. This family has been referred to as the $g$-family in the classical works of M. H\'enon \cite{Henon}. The periodic orbits in the figure are shown in the physical coordinates. In a forthcoming work we will provide more details on the structure of the families of planar periodic orbits of this system.

\begin{figure}
  \centering
  \includegraphics[width=2.2in]{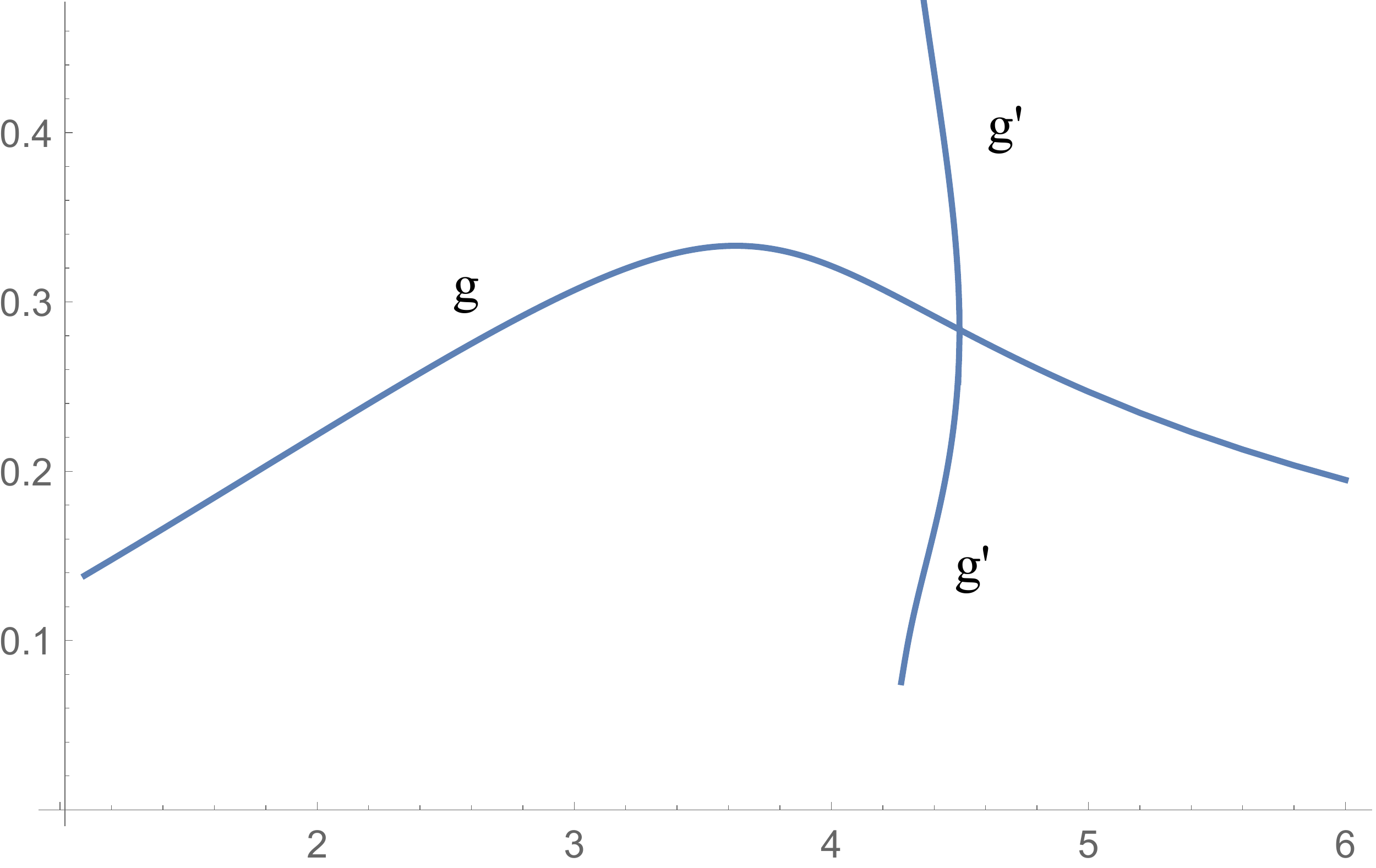}\quad
  \includegraphics[width=2.4in]{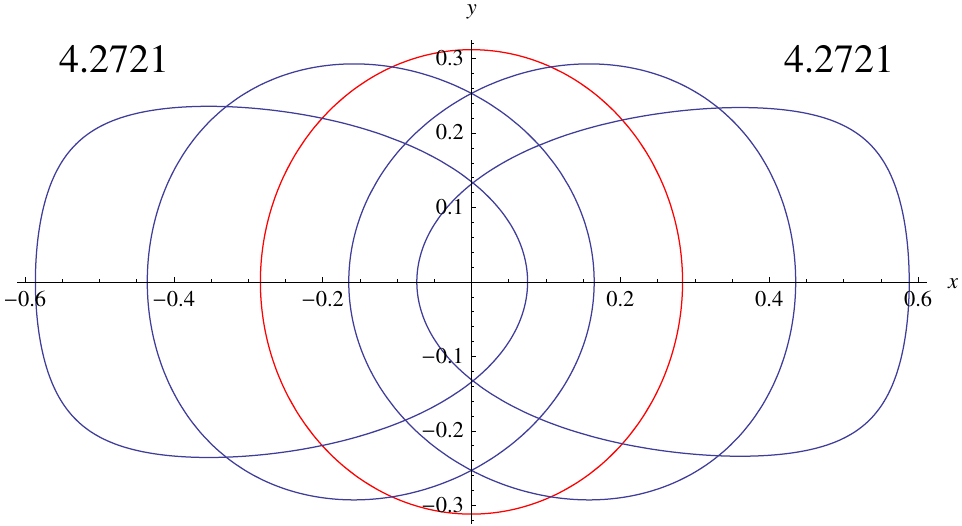}\\
 \caption{Left. Characteristic curves for the $g$-family  and the bifurcating branches $g'$ in the plane $(C,x_{0})$. Right. Evolution of the periodic orbits after the bifurcating periodic orbit (in red). The periodic orbits on the right corresponds to the upper branch $g'$ and the periodic orbits on the left corresponds to the lower branch $g'$. }\label{bifurcation}
\end{figure}

In Fig. \ref{hill_mu05} we show the Poincar\'e sections for  the Hill restricted four-body problem with $\mu=0.5$ for various values of the Jacobi constant $C$.  The behavior is similar, with the major difference that the pitchfork bifurcation   occurs after the zero velocity curve opens.

\begin{figure}$\begin{array}{cc}
\includegraphics[width=0.4\textwidth]{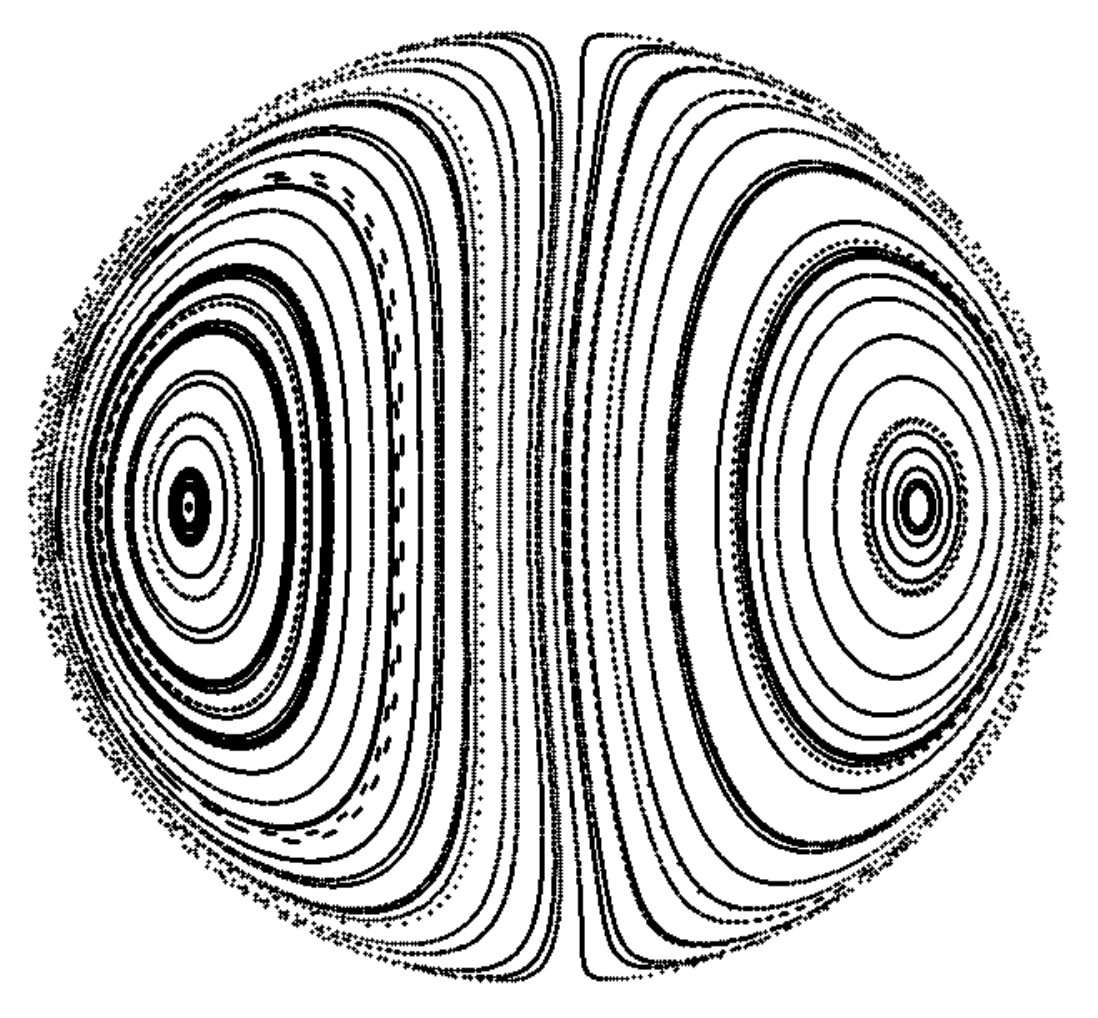}&
\includegraphics[width=0.4\textwidth]{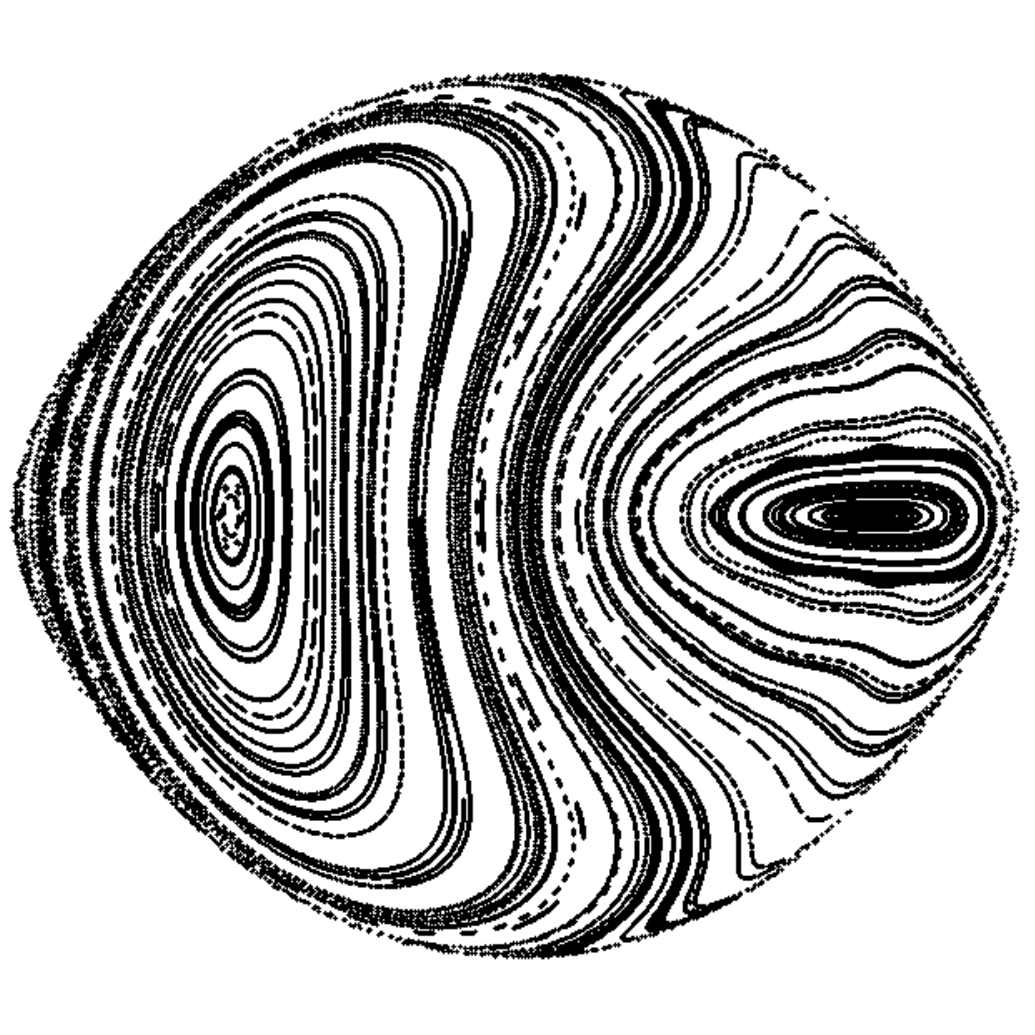}\\
\includegraphics[width=0.4\textwidth]{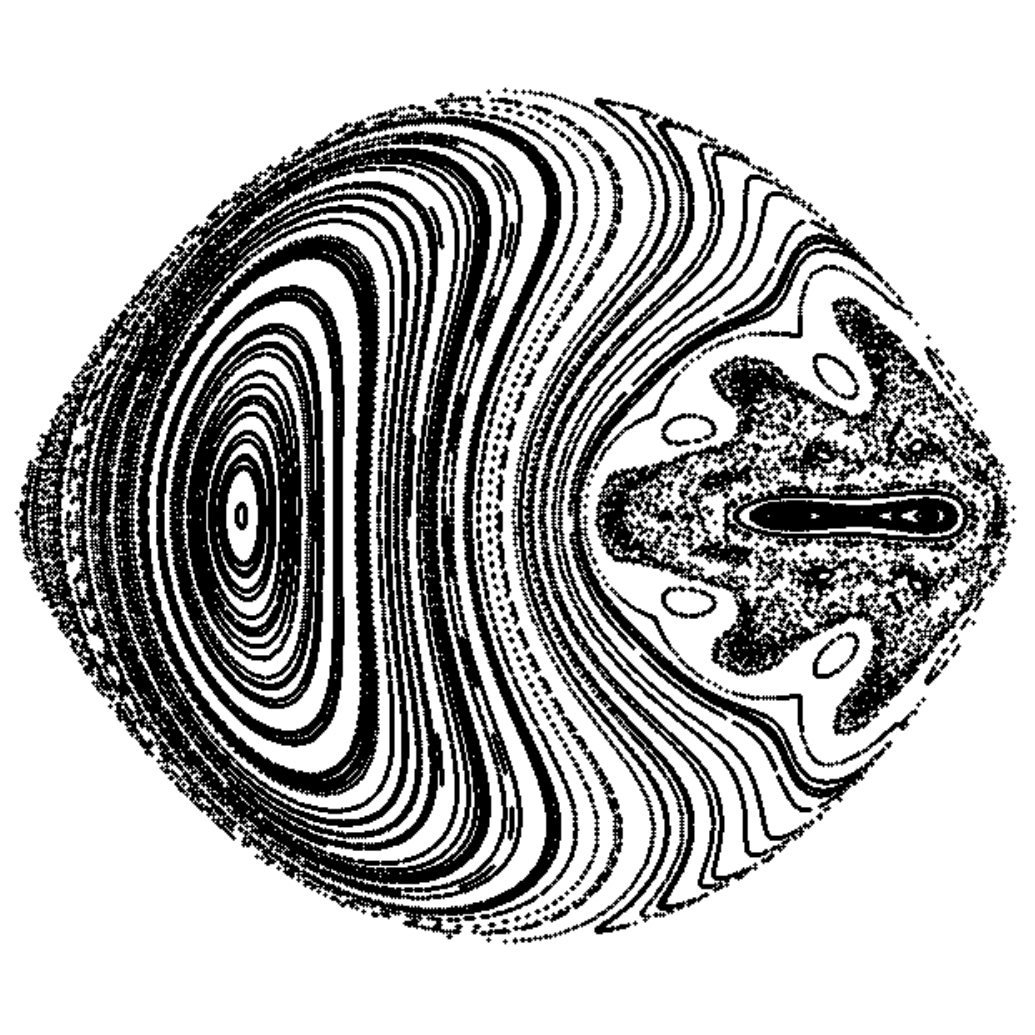}&
\includegraphics[width=0.4\textwidth]{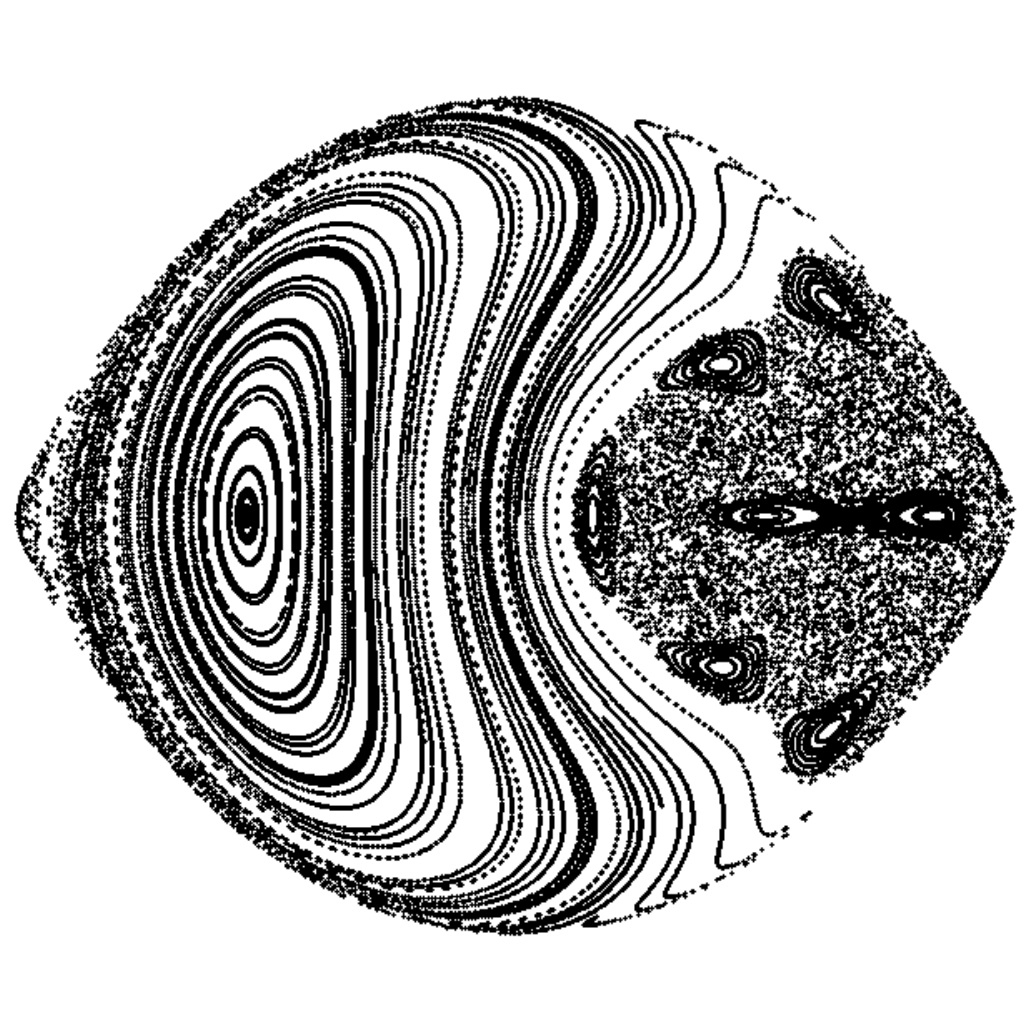}\\
\includegraphics[width=0.4\textwidth]{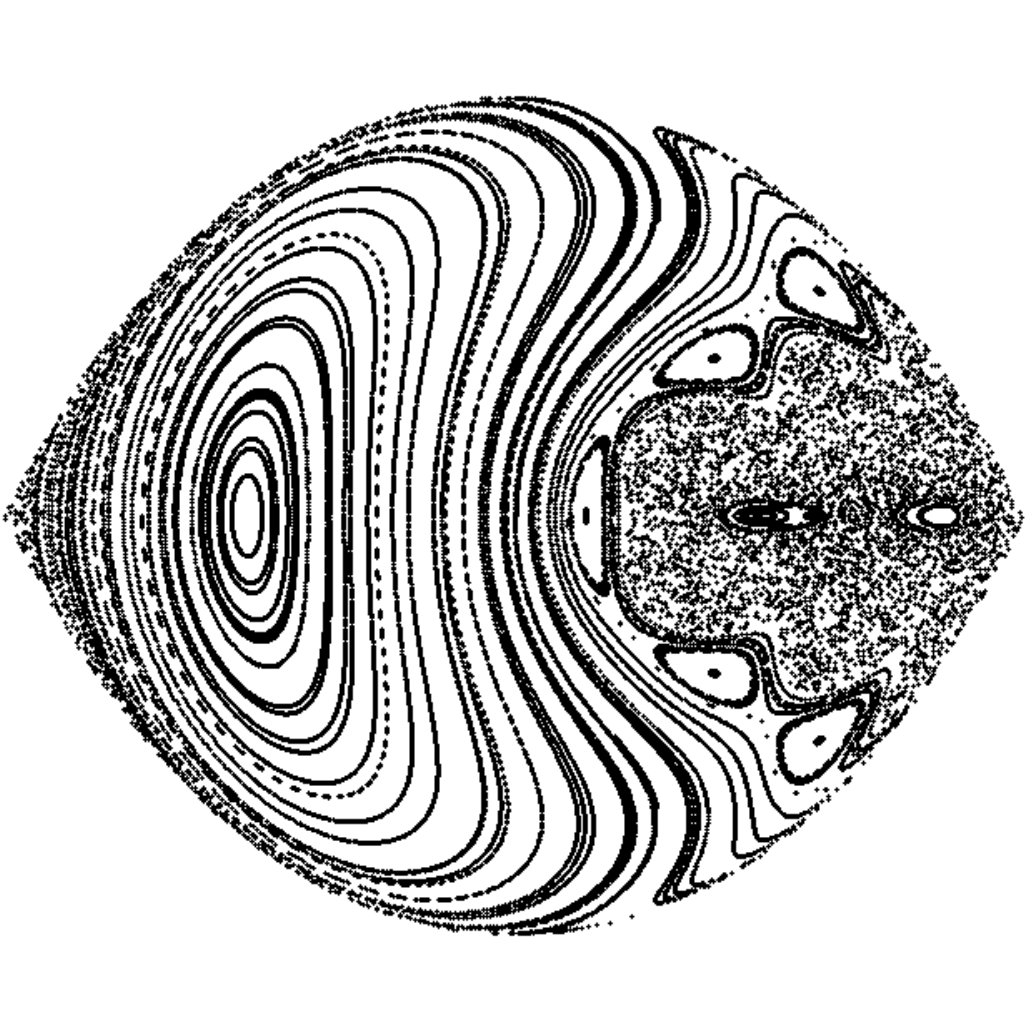}&
\includegraphics[width=0.4\textwidth]{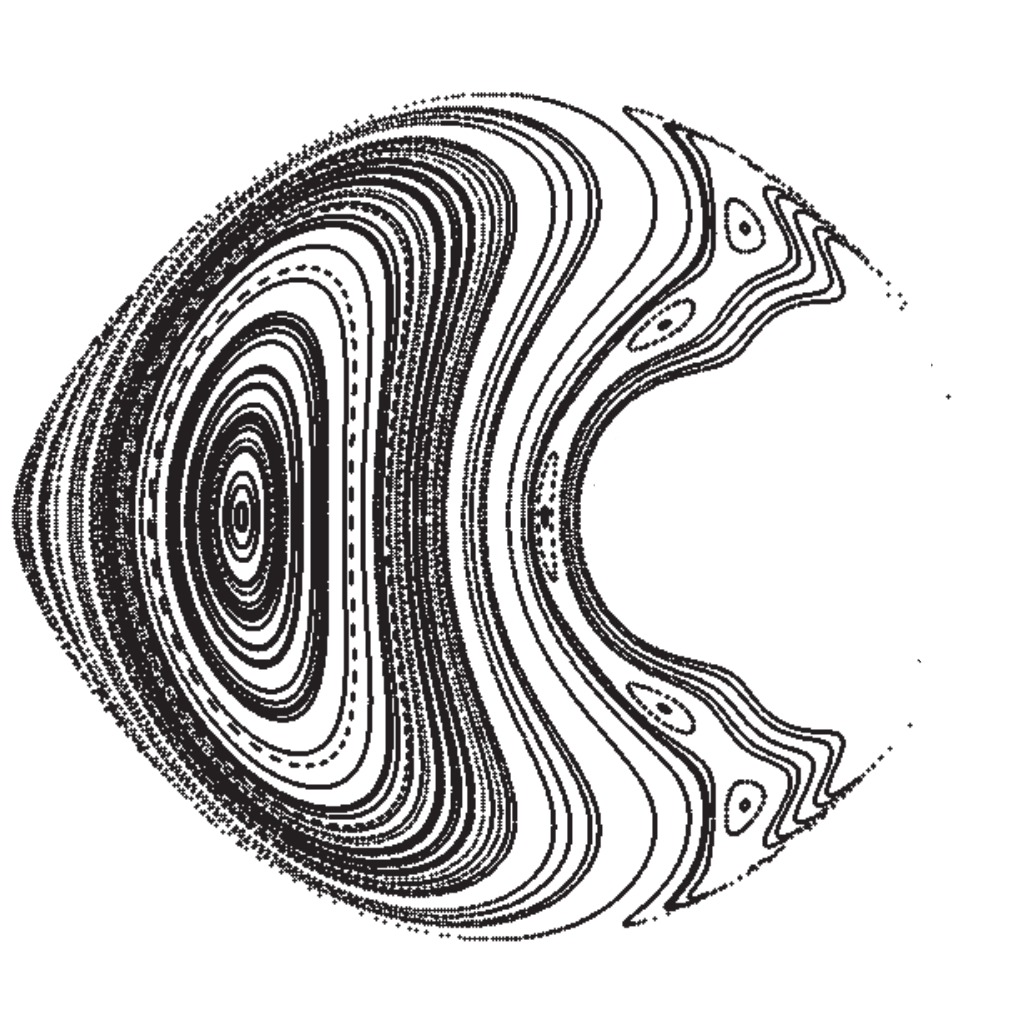}\\
\end{array}$
\caption{Poincar\'e sections for the Hill restricted four-body problem
(a)   $C=13.57209$ and $\mu=0.1$;
(b)   $C=4.329636$ and $\mu=0.1$;
(c)   $C=4.25334$ and $\mu=0.1$;
(d)   $C= 4.228647$ and $\mu=0.1$;
(e)   $C= 4.21887$ and $\mu=0.1$;
(f)   $C= 4.110353$ and $\mu=0.1$.
}
\label{hill_mu01}
\end{figure}

\begin{figure}$\begin{array}{cc}
\includegraphics[width=0.4 \textwidth]{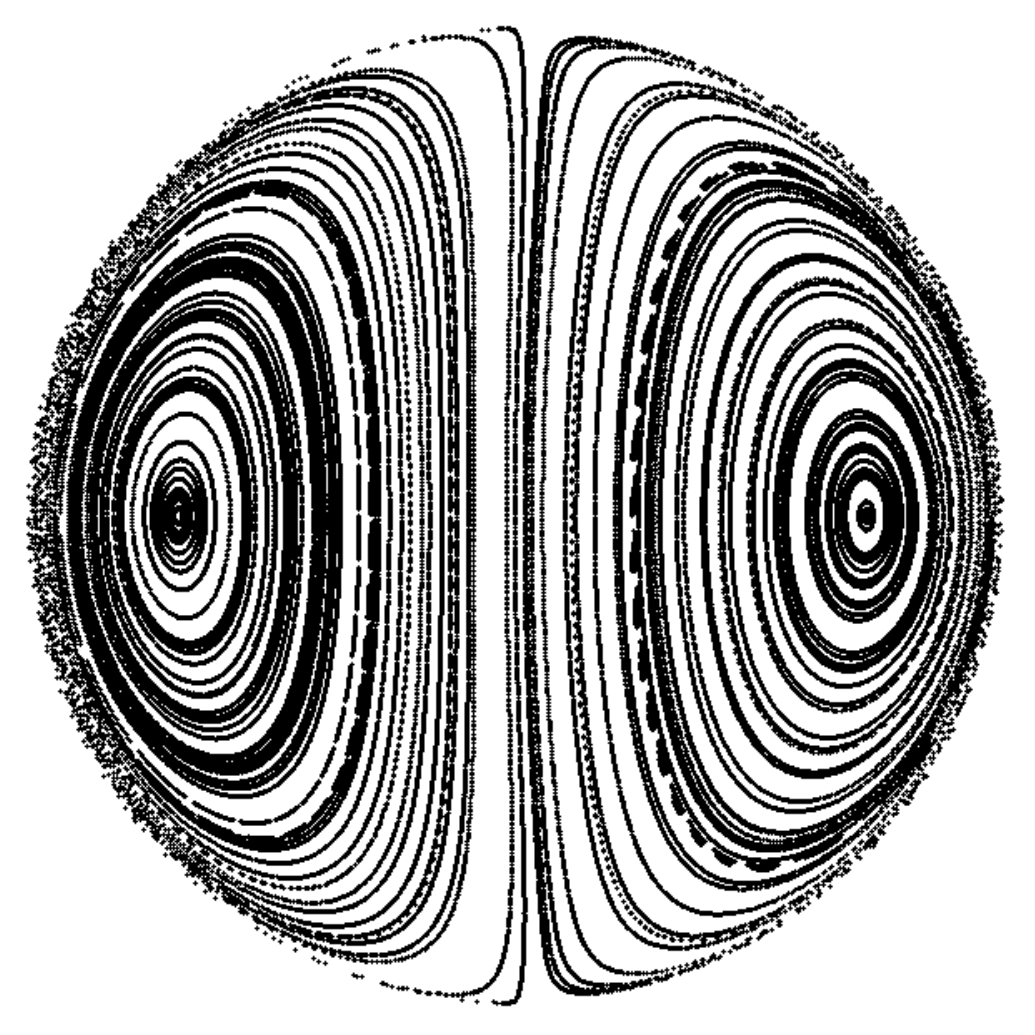}&
\includegraphics[width=0.4 \textwidth]{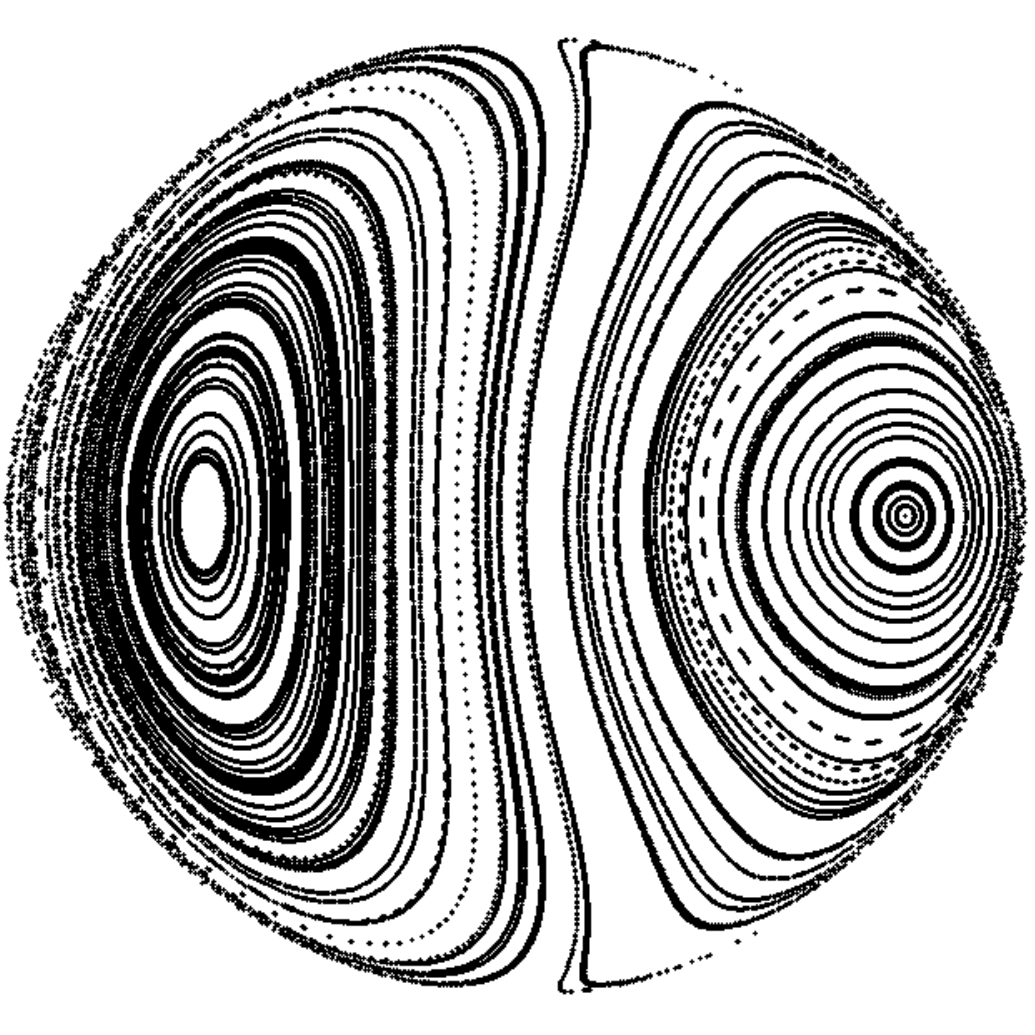}\\
\includegraphics[width=0.4 \textwidth]{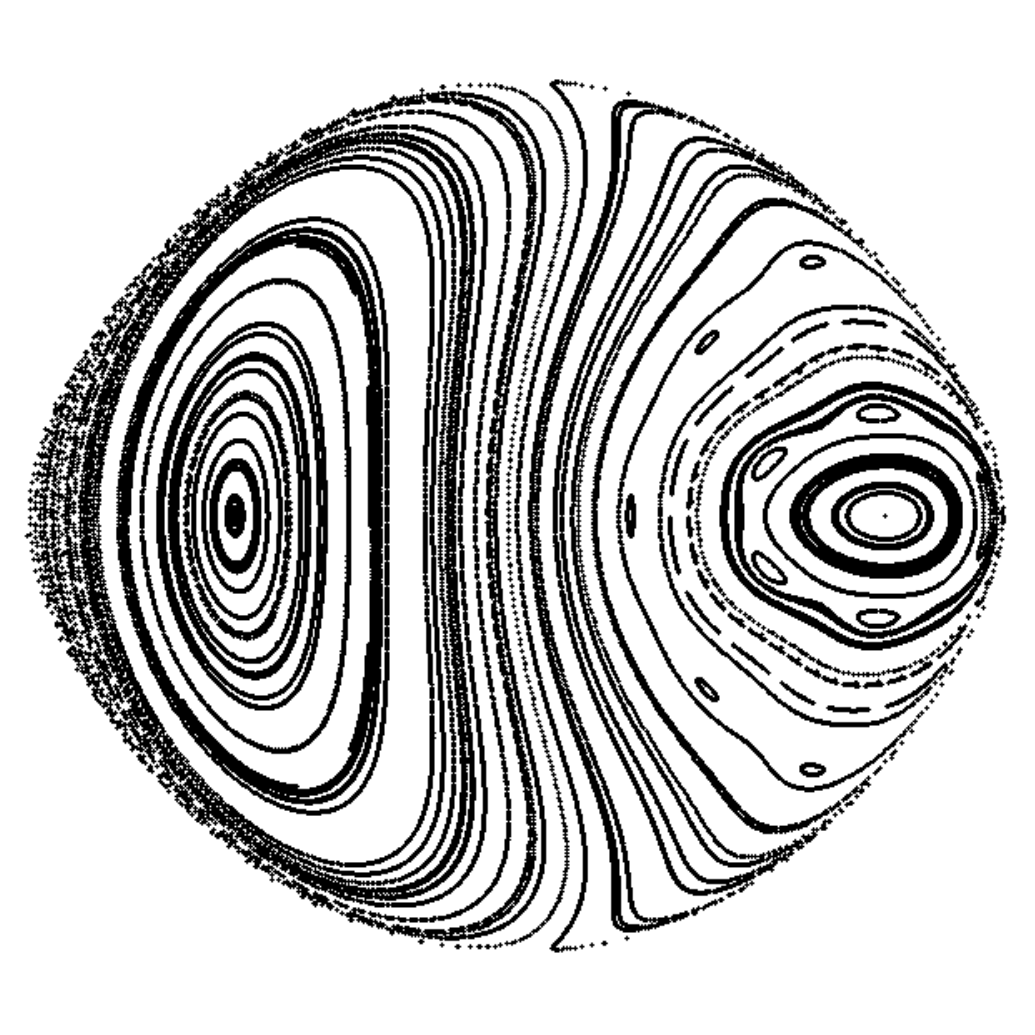}&
\includegraphics[width=0.4 \textwidth]{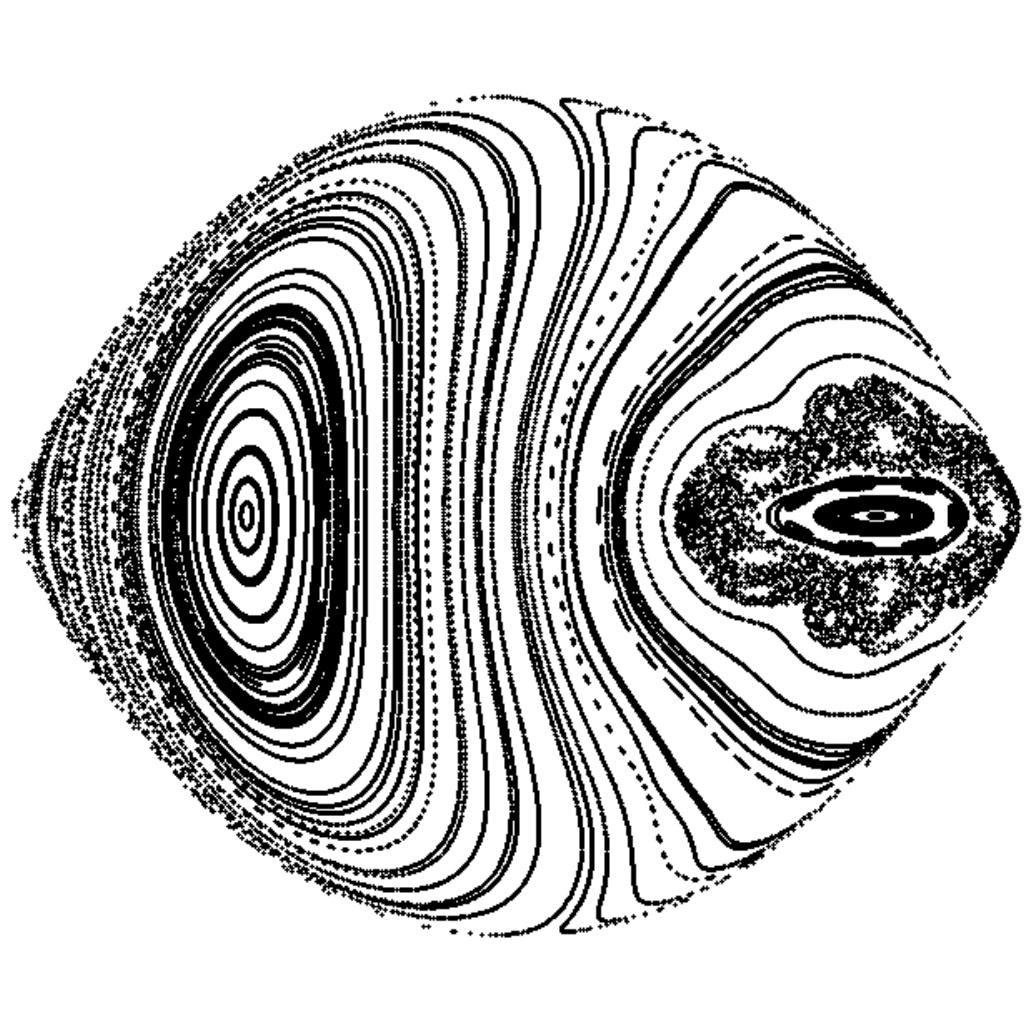}\\
\includegraphics[width=0.4 \textwidth]{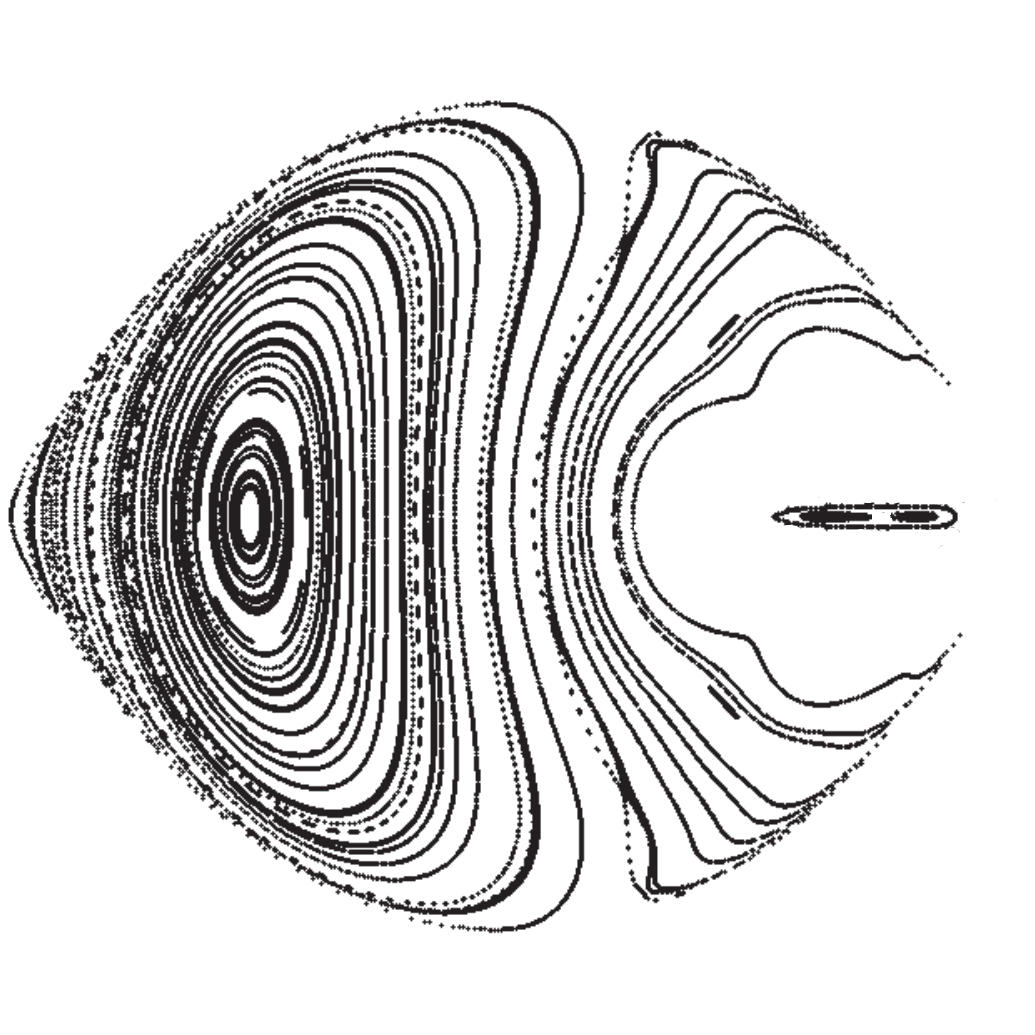}&
\includegraphics[width=0.4 \textwidth]{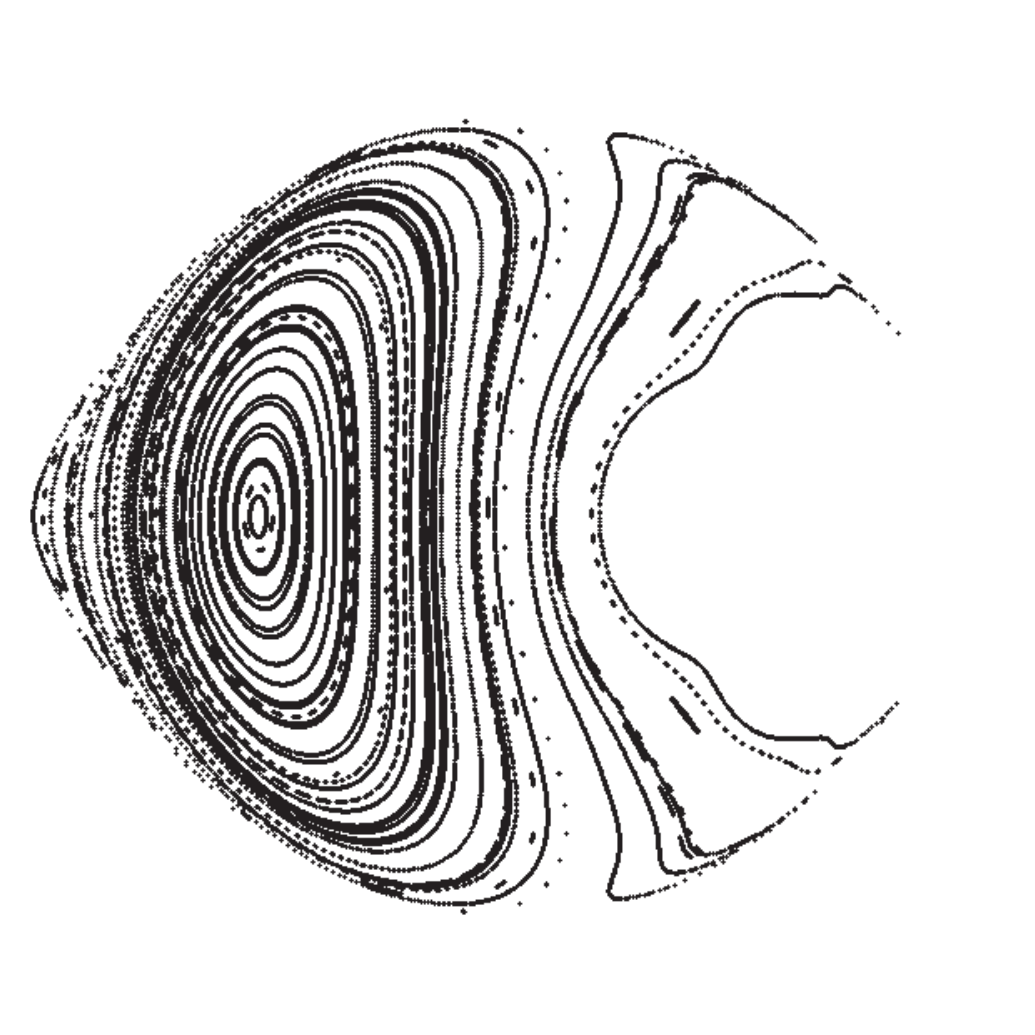}\\
\end{array}$
\caption{Poincar\'e sections for the Hill restricted four-body problem
(a)   $C= 13.57209$ and $\mu=0.5$;
(b)   $C= 4.641589$ and $\mu=0.5$;
(c)   $C=  4.110353$ and $\mu=0.5$;
(d)  $C=3.937253$ and $\mu=0.5$;
(e) $C=  3.882841$ and $\mu=0.5$;
(f) $C=3.818828$ and $\mu=0.5$.
}
\label{hill_mu05}
\end{figure}

\subsection{Numerical explorations of the invariant manifolds of the Lyapunov orbits of the equilibrium point $L_{1}$.}

In this section we perform a numerical exploration of the stable ($W^{s}$) and unstable ($W^{u}$) manifolds of the Lyapunov orbits for the saddle-center equilibrium points $L_{1}$ and $L_{2}$. Because of the symmetries of the equations of motion, it will be enough to study the invariant manifolds for the equilibrium point $L_{1}$, the corresponding manifolds for $L_{2}$ can be obtained by symmetry. The numerical explorations were performed using Hill's equations for the restricted four body problem with the mass parameter $\mu=0.00095$ that corresponds to the mass ratio of Jupiter-Sun, The value of the Jacobi constant at this equilibrium point is $C_{L_{1}}=4.32572$. In the Fig. \ref{liapunovorbits} we can show the evolution of the family of the periodic orbits emanating of $L_{1}$  for several values of the Jacobi constant.
\begin{figure}%[!hbp]
\centering
\includegraphics[width=2.0in]{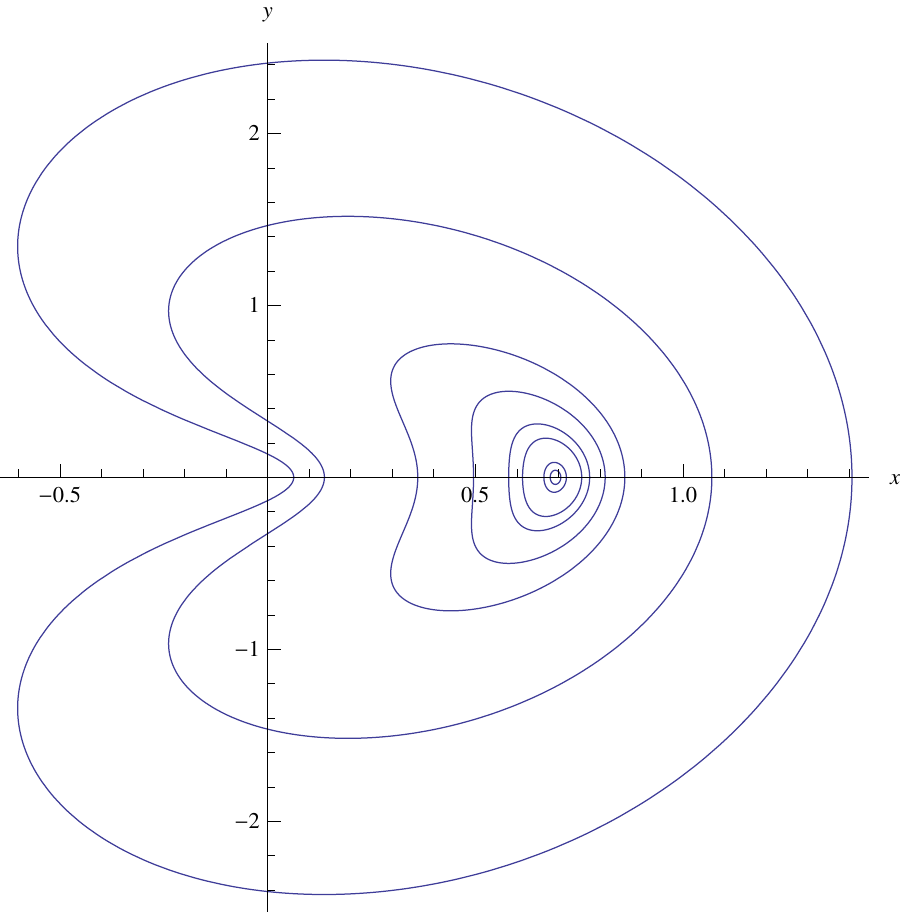}
\caption{Evolution of the family of Lyapunov orbits.\label{liapunovorbits}}
\end{figure}

\subsection{The inner region}

The inner region corresponds to the small (blue) region around the tertiary as it is shown in the Fig. \ref{limithillregions}. In order to visualize the behavior of the invariant manifolds in this region, we choose the Poincar\'e section $\Sigma:=\{(x,y,\dot{x},\dot{y})\in\mathbb{R}^{4}\vert x=0\}$ that corresponds to the intersections of trajectories with the $y-$axis. This section contains two subsections $\Sigma^{+}:=\{(x,y,\dot{x},\dot{y})\in\mathbb{R}^{4}\,\vert\, x=0,y>0\}$ and $\Sigma^{-}:=\{(x,y,\dot{x},\dot{y})\in\mathbb{R}^{4}\,\vert\, x=0,y<0\}$, and we are going to consider  crossings (cuts) made by manifolds with these subsection. A trajectory  that intersects transversely the section $\Sigma$, i.e., with velocity $\dot{x}\ne0$, is uniquely determined by the coordinates $(y,\dot{y})$ of the intersection point, as we can obtain the initial condition for the trajectory by considering $x=0$ and solving for $\dot{x}$ from the first integral. A trajectory is tangential to $\Sigma$ if  it intersects the surface section with $\dot{x}=0$, therefore the coordinates $(y,\dot{y})$ at the tangency points can be obtained from the first integral  as $$\dot{y}^{2}=2\Omega(0,y,\mu)-C,$$  which defines a curve in the plane $(y,\dot{y})$ that depends on the mass parameter $\mu$ and the Jacobi constant $C$.

First we choose the value $C=4.3$ close to $C_{L_{1}}$ for the computation of the invariant manifolds $W^{u}$ and $W^{s}$ of the Lyapunov orbits.
The first few cuts  $W^{u}$ and $W^{s}$, viewed as subsets of $(\dot{y},y)$-are diffeomorphic to circles. In Fig.  \ref{5cuts} we show the first five intersections of the invariant manifolds with $\Sigma$ in the so called inner region. The unstable manifold is shown in blue and the stable one is shown in red. In the following we adopt the notation $W_{n}^{s}$ and $W_{m}^{s}$ where $n$ and $m$ count the number of cuts with the surface section. We stress that  this notation takes  into account the cuts with either  subsection $\Sigma^{+}$ or  subsection $\Sigma^{-}$.

\begin{figure}
  \centering
  \includegraphics[width=1.4in]{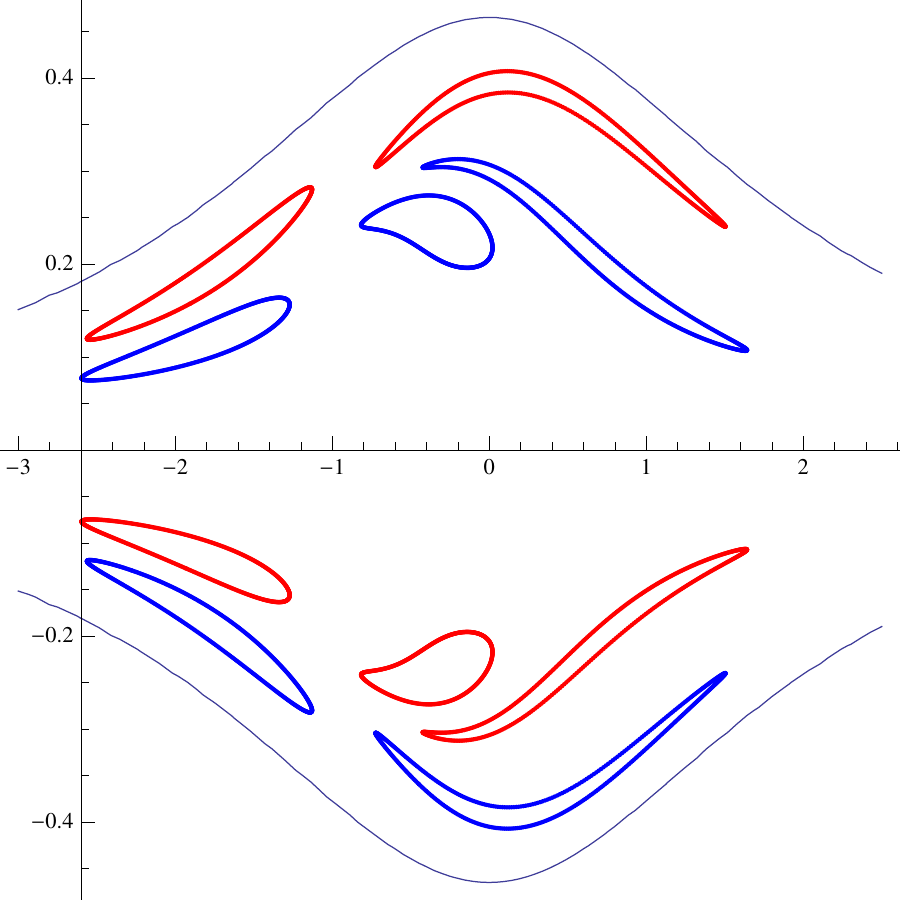}\quad
  \includegraphics[width=1.4in]{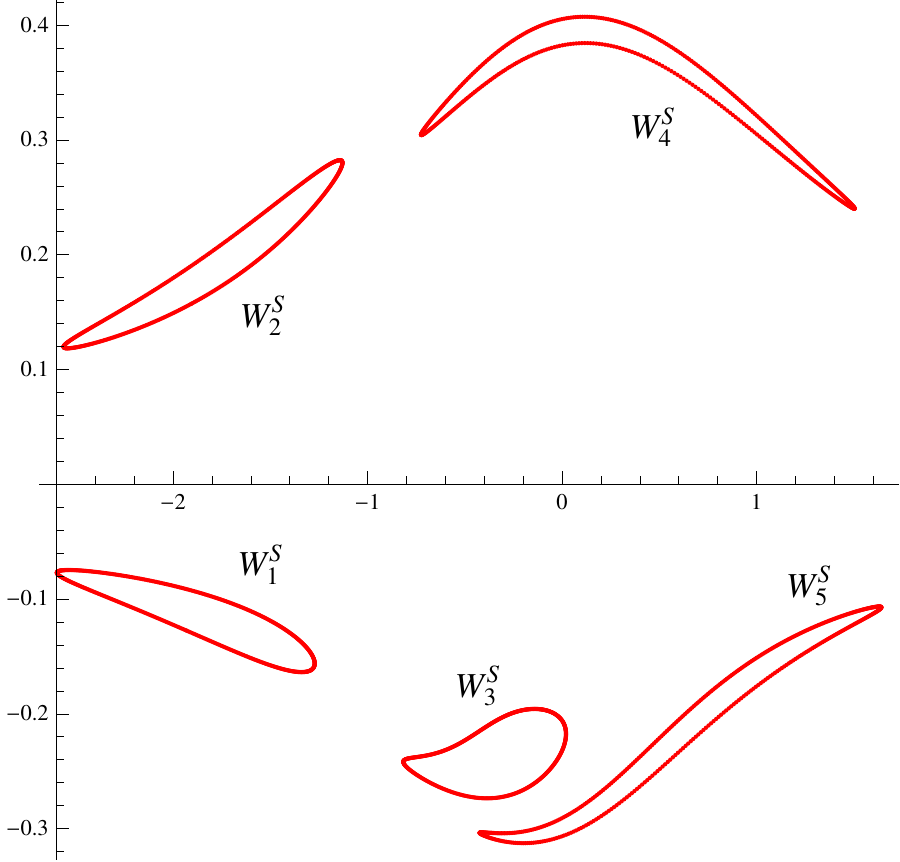}\\
  \includegraphics[width=1.4in]{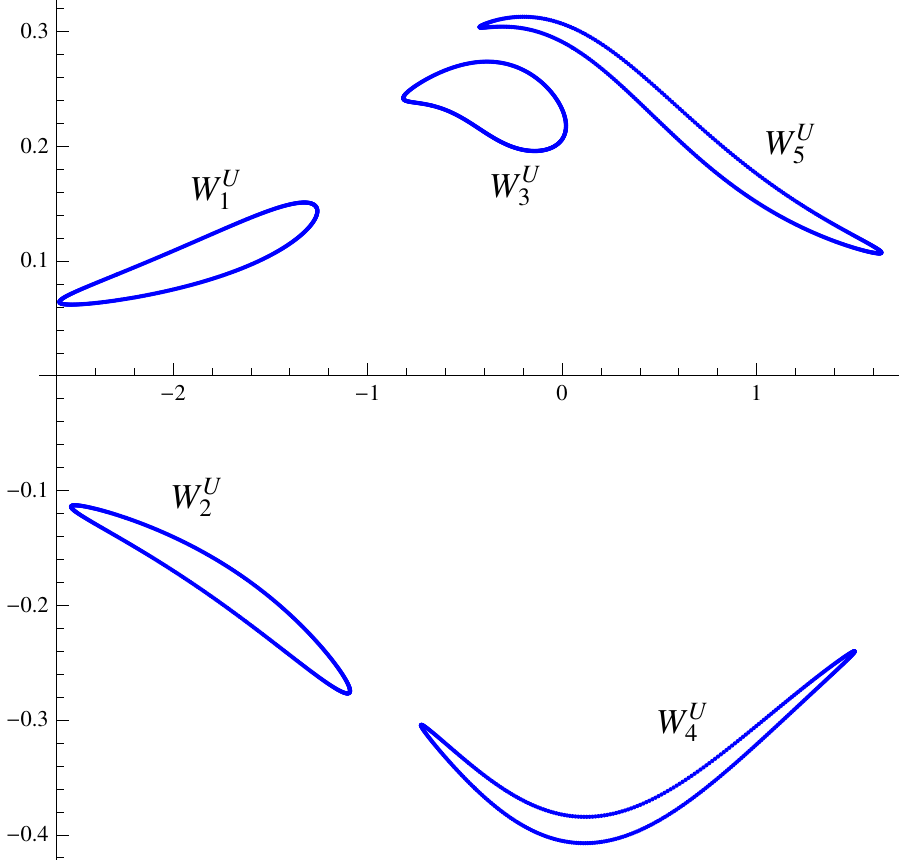}
      \caption{Stable and unstable manifolds after five cuts with the Poincar\'e section $\Sigma$ and tangency curve in the plane $(\dot{y},y)$, top right. The five cuts of $W^{s}$ are shown in top left, and the five cuts of $W^{u}$ are shown in second row.}\label{5cuts}
\end{figure}
\begin{figure}%[!hbp]
\centering
\includegraphics[width=2.0in]{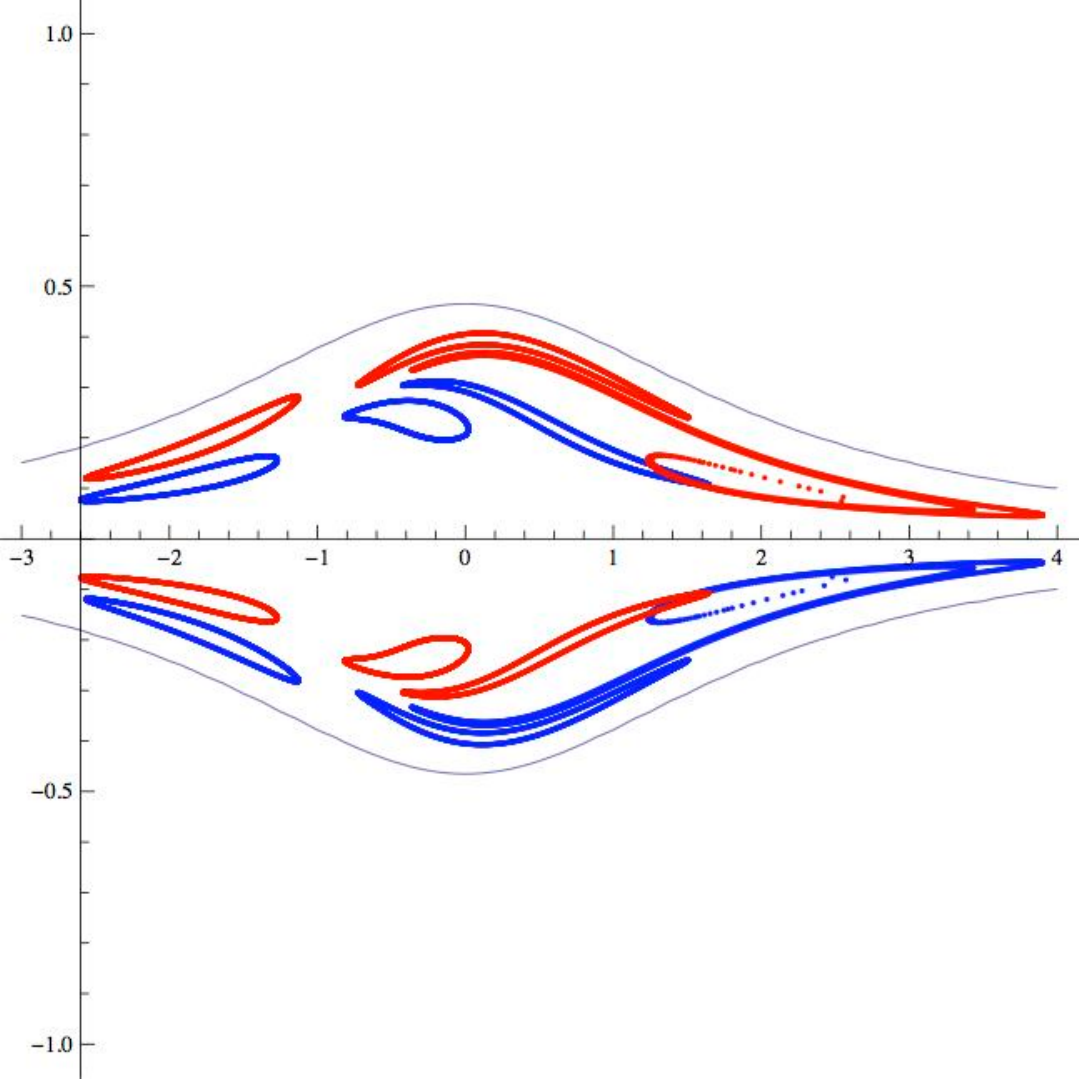}
\caption{The stable and unstable manifolds after six cuts with the Poincar\'e section in the plane $(\dot{y},y)$.\label{sixcuts}}
\end{figure}

At the sixth cut with the surface of section, shown in Fig. \ref{sixcuts}, we detect the first intersections between the invariant manifolds, as the intersections  between $W_{5}^{u}$ and $W_{6}^{s}$, and also, because of the symmetry of the equations,  as the intersections  between $W_{6}^{u}$ and $W_{5}^{s}$; see Fig. \ref{firstintersections}. After this first intersections the cuts determined by the invariant manifolds on the $(\dot{y},y)$-plane  are no longer diffeomorphic to circles, moreover, as expected, a transverse homoclinic point begets other homoclinic points near the original one. The intersection between $W_{5}^{u}$ and $W_{6}^{s}$ occurs in the subsection $\Sigma^{+}$; following  $W^{s}$  we find that the next cut with $\Sigma^{+}$, denoted by $W_{8}^{s}$, intersects with $W_{5}^{u}$; see Fig. \ref{secondintersections} (a). Analogously, the intersection between $W_{6}^{u}$ and $W_{5}^{s}$ occurs in the subsection $\Sigma^{-}$; following   $W^{s}$ we find that the next cut with $\Sigma^{-}$, denoted by $W_{7}^{s}$, intersects with $W_{6}^{u}$; see Fig. \ref{secondintersections} (b). In a similar way we can consider the next cuts of $W^{u}$ with $\Sigma^{-}$ and $\Sigma^{+}$ to find transverse intersections between $W_{8}^{u}$ and $W_{5}^{s}$, and between $W_{7}^{u}$ and $W_{5}^{7}$, respectively.

In the Fig. \ref{tencuts} we show the transverse intersections for some of the subsequent cuts of the manifolds with the section $\Sigma$.

\begin{figure}
  \centering
  \includegraphics[width=1.4in]{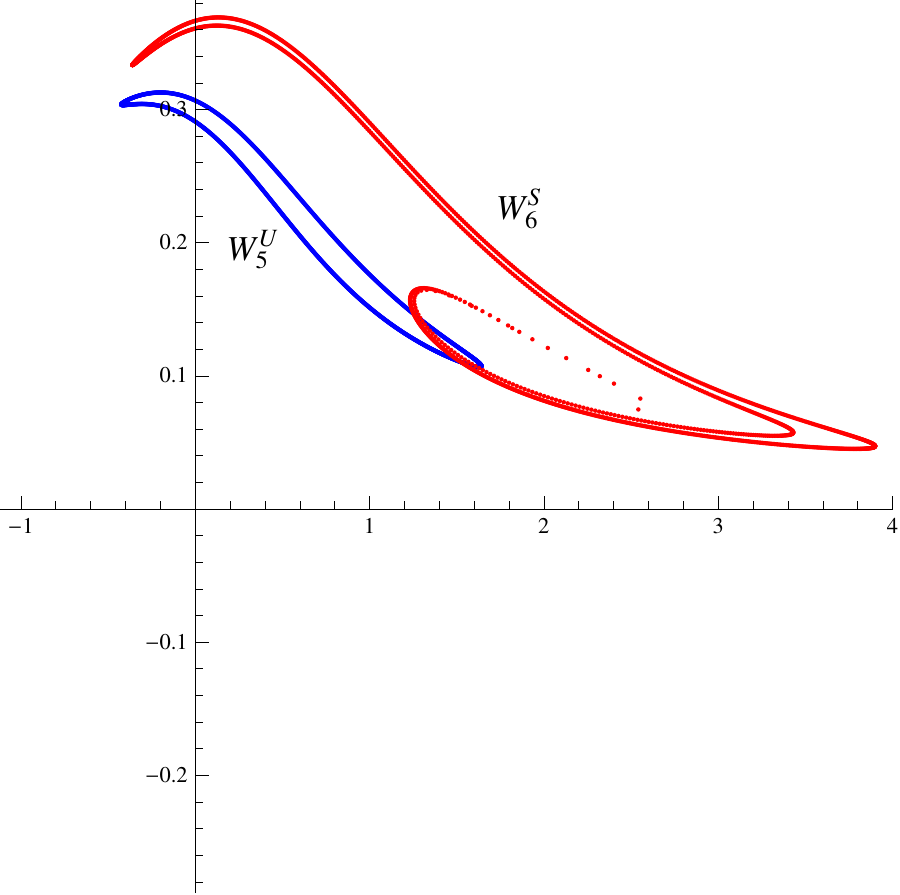}\quad
  \includegraphics[width=1.4in]{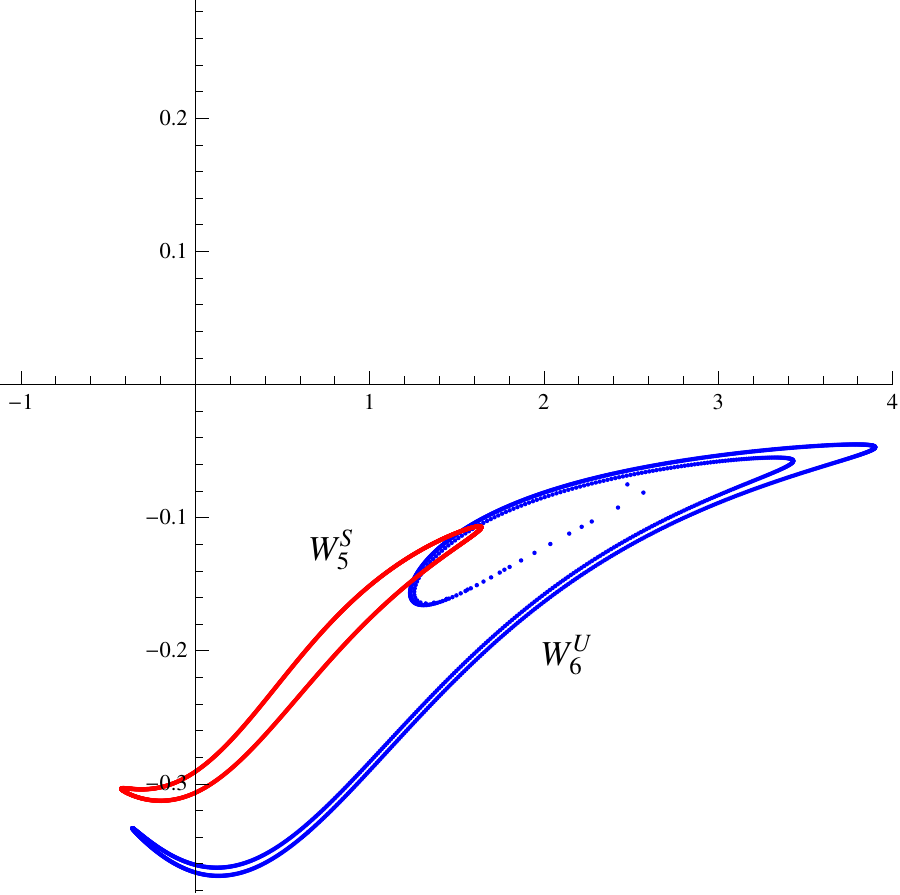}\\
 \caption{Intersection between $W_{5}^{u}$ and $W_{6}^{s}$ in the plane $(\dot{y},y)$, left. Intersection between $W_{6}^{u}$ and $W_{5}^{s}$, right.}\label{firstintersections}
\end{figure}

\begin{figure}
  \centering
  \includegraphics[width=1.4in]{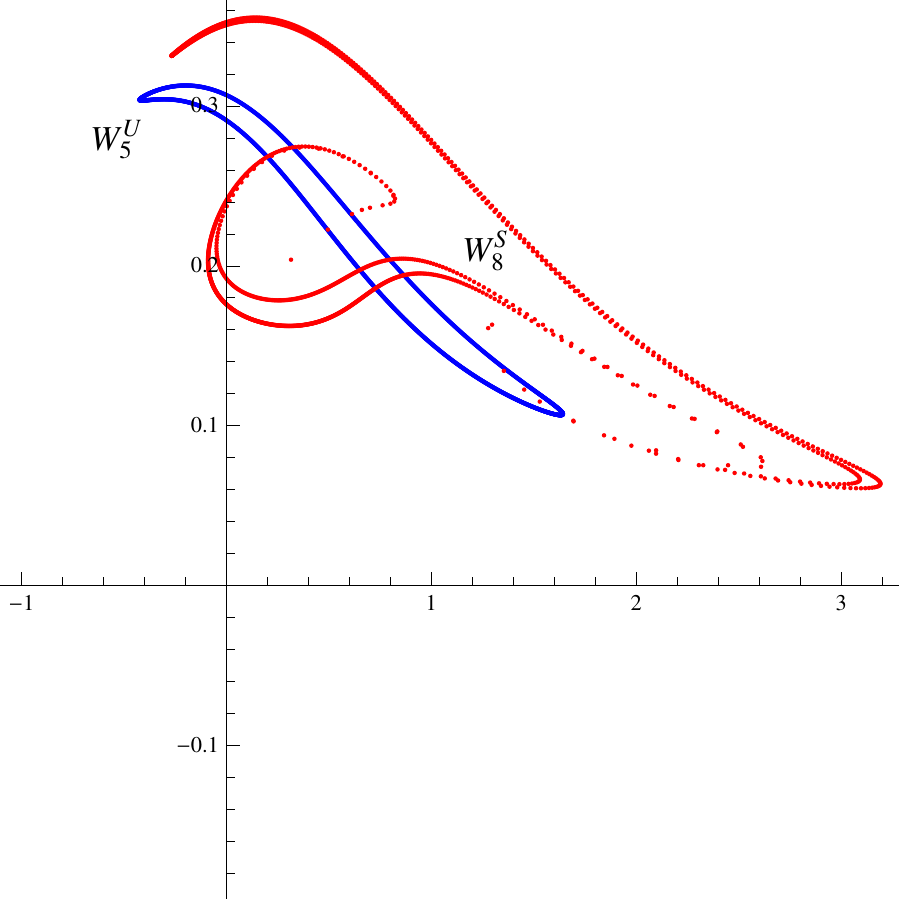}\quad
  \includegraphics[width=1.4in]{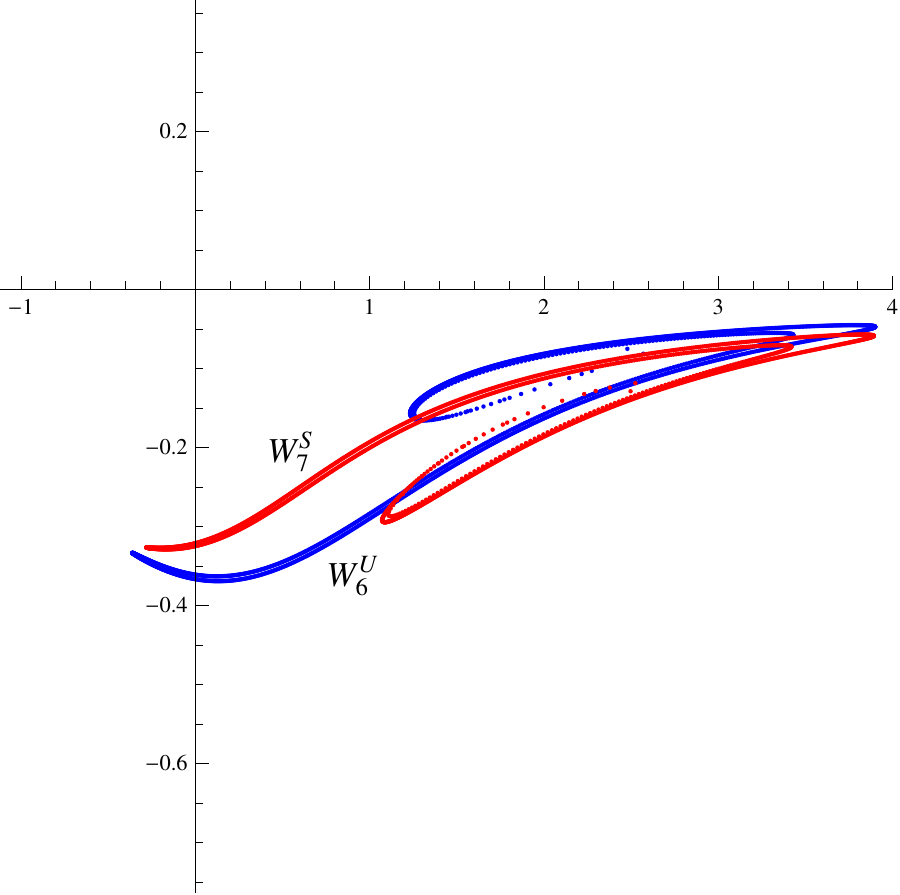}\\
 \caption{Intersection between $W_{5}^{u}$ and $W_{8}^{s}$ in the plane $(\dot{y},y)$, left. Intersection between $W_{6}^{u}$ and $W_{7}^{s}$, right.}\label{secondintersections}
\end{figure}

For Lyapunov orbits with higher energies we find that the invariant manifolds intersect `faster' than the previous case, the first intersections appear between $W_{1}^{u}$ and $W_{2}^{s}$ and the symmetric intersection $W_{2}^{u}$ and $W_{1}^{s}$. In the Fig. \ref{fourcutsother} we show the first intersections of the invariant manifolds for $C=4.15$.
A similar analysis on how the first cuts of the stable and unstable manifolds depend on the energy level has been done in the case of the R3BP in \cite{GideaM}.
\begin{figure}%[!hbp]
\centering
\includegraphics[width=2.0in]{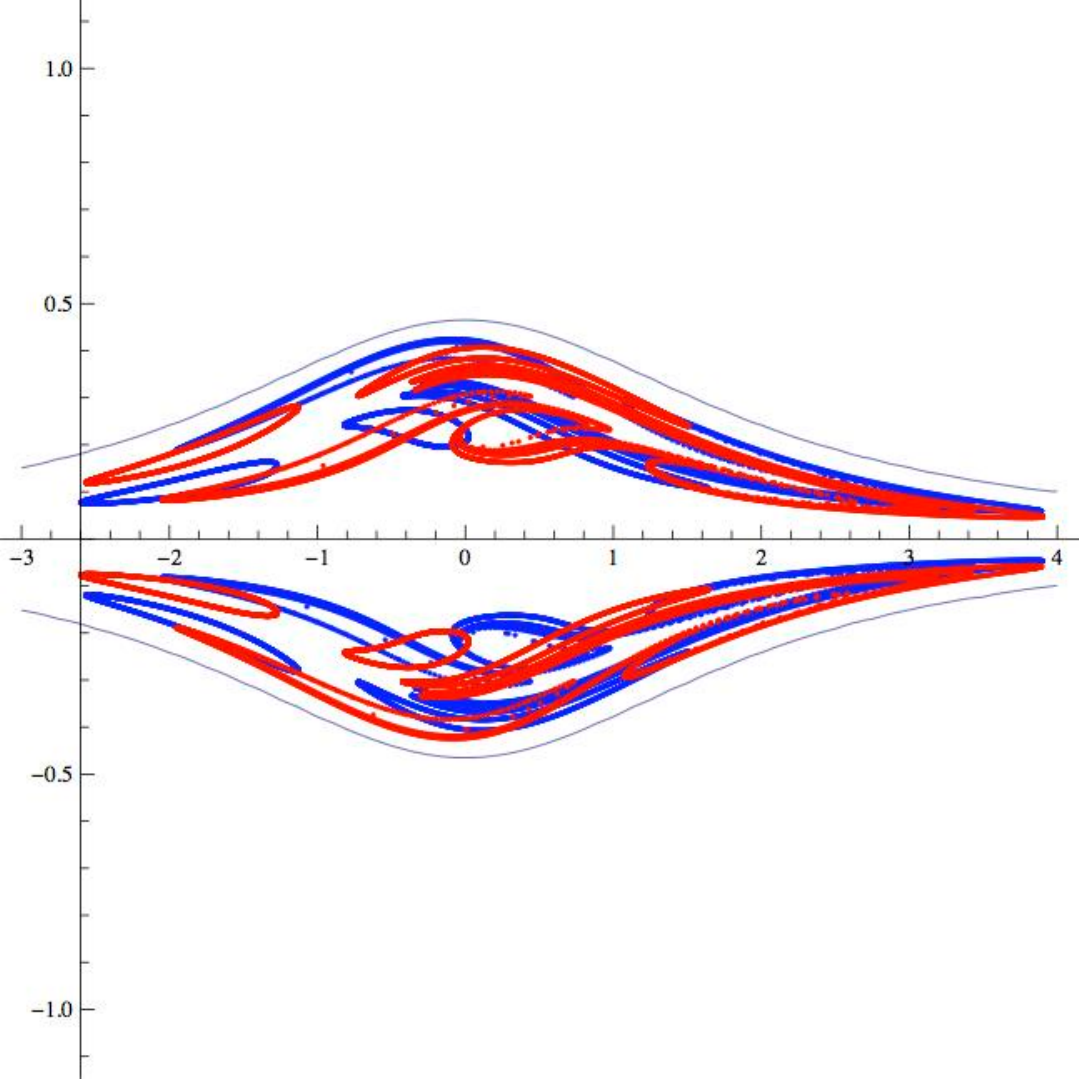}
\caption{The stable and unstable manifolds after ten cuts with the Poincar\'e section in the plane $(\dot{y},y)$.\label{tencuts}}
\end{figure}
\begin{figure}%[!hbp]
\centering
\includegraphics[width=2.0in]{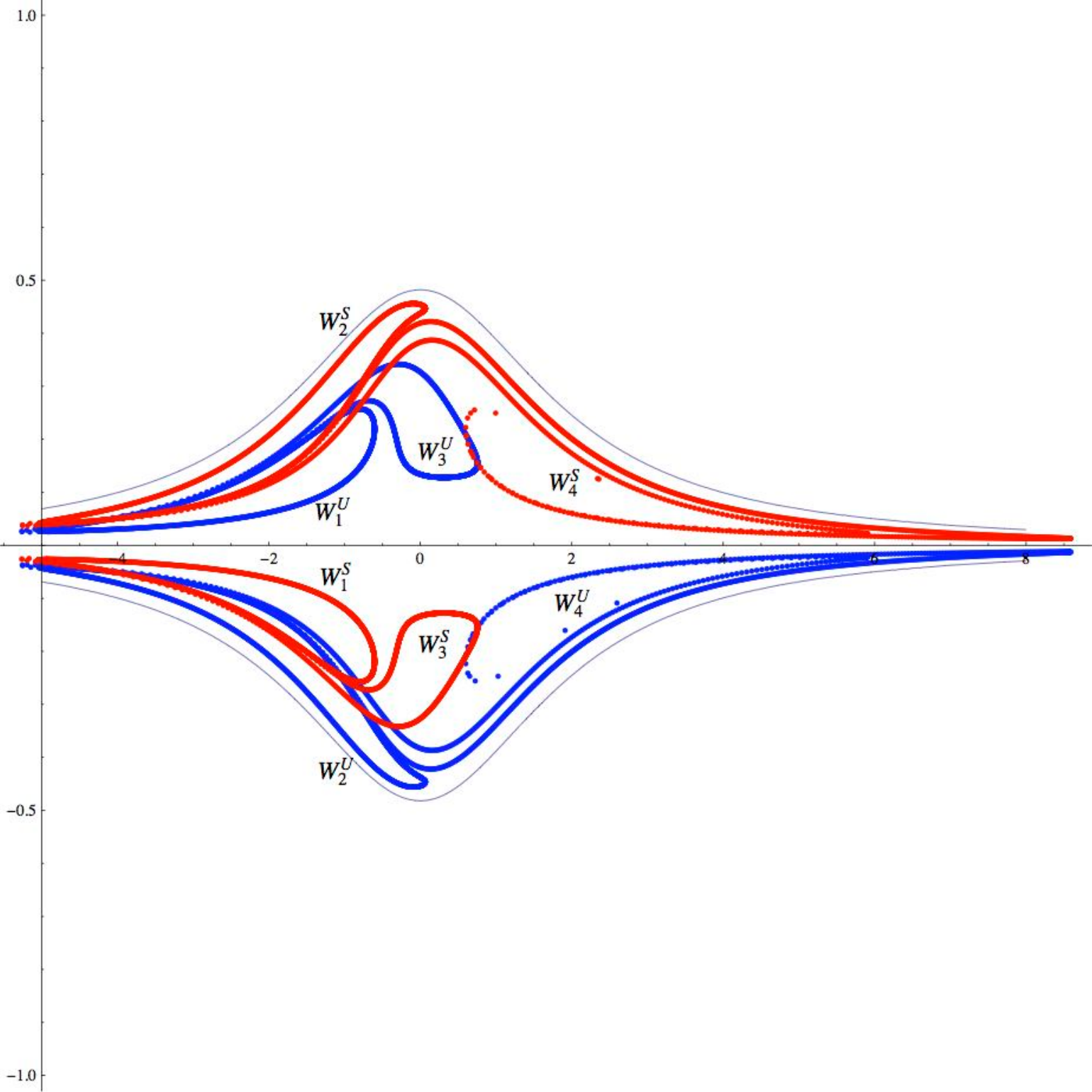}
\caption{The stable and unstable manifolds after four cuts with the Poincar\'e section in the plane $(\dot{y},y)$ for $C=4.15$.\label{fourcutsother}}
\end{figure}

\subsection{The outer region}

The outer region corresponds to the unbounded (blue) region shown in the Fig. \ref{limithillregions}. For this region we consider the branch of the invariant manifolds moving away from the secondary either in forward or backward time. It is worth noting that for the current value of the mass parameter, this branch does not escape to infinity as in the classical Hill problem \cite{Simo}. This behavior is due to the fact in the outer region the gravitational effect of the secondary is small so the dominant part on the dynamics of the infinitesimal mass is the quadratic part of the R3BP from \eqref{quadraticpart}. The invariant manifolds are diffeomorphic to cylinders that turn around close to the zero velocity curve and the behavior of each trajectory on the invariant manifolds is similar to the motion around the equilibrium point $L_{4}$ of the R3BP; see Fig. \ref{manifoldsouterregion}. In order to have a better view of the behavior of the invariant manifolds on the outer region, we  perform the computations in the original (non rotated) coordinates centered at the equilibrium point,  and choose the surface section $\Sigma':=\{(x,y,\dot{x},\dot{y})\in\mathbb{R}^{4}\,\vert\, x=-x_{L_{1}}\}$ which is parallel to the previously considered section $\Sigma$.

\begin{figure}%[!hbp]
\centering
\includegraphics[width=2.0in]{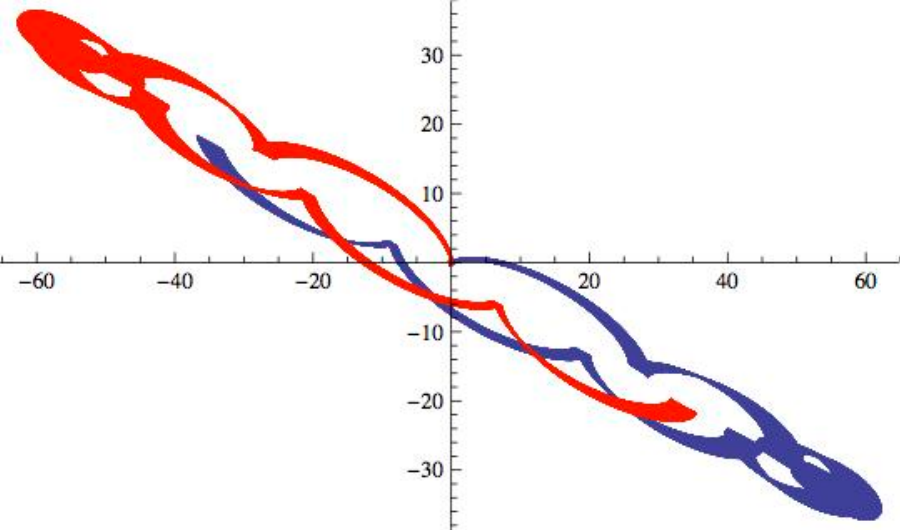}
\caption{Projection of the stable and unstable manifolds on the plane $(x,y)$ in the outer region.\label{manifoldsouterregion}}
\end{figure}
\begin{figure}%[!hbp]
\centering
\includegraphics[width=2.5in]{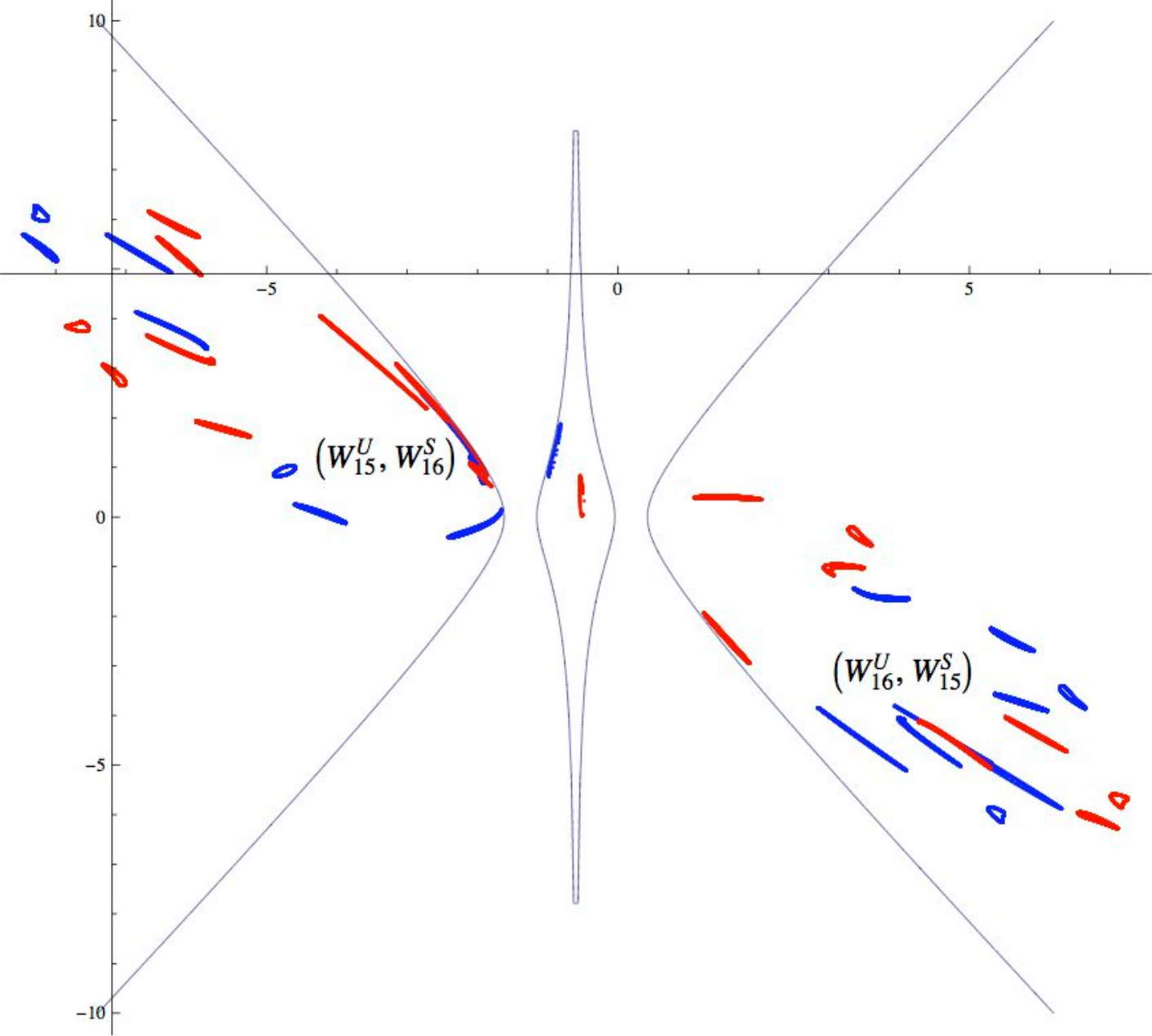}
\caption{Projection of the stable and unstable manifolds on the plane $(y,\dot{y})$ in the outer region.\label{outercuts}}
\end{figure}
\begin{figure}
  \centering
  \includegraphics[width=1.8in]{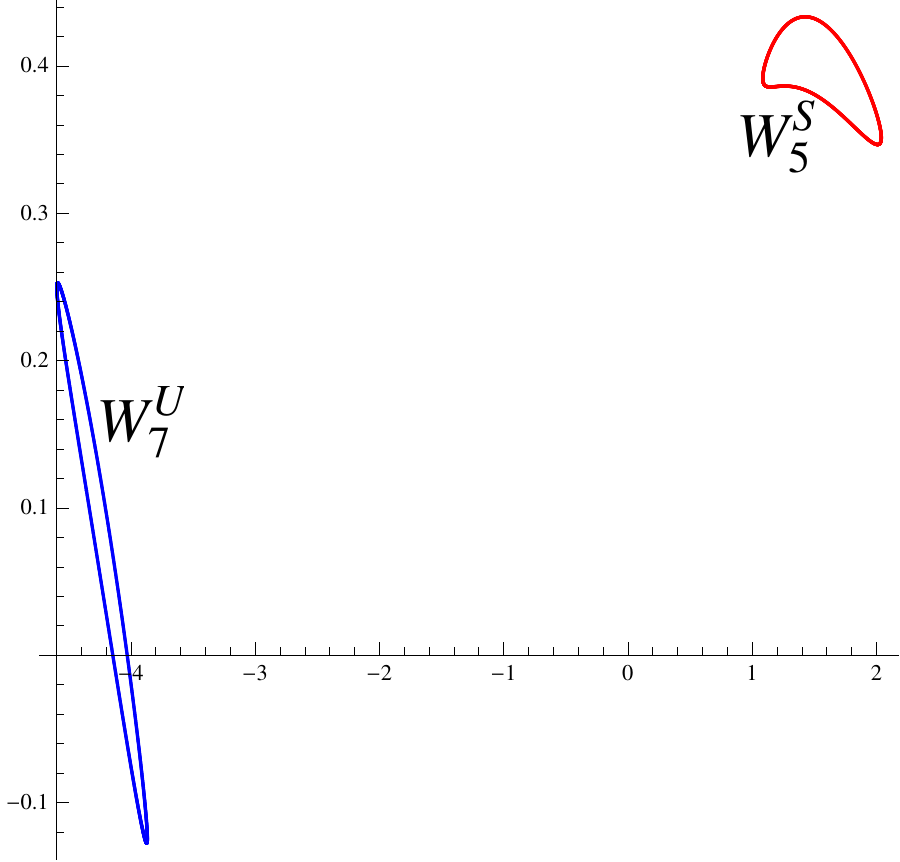}\quad
  \includegraphics[width=1.8in]{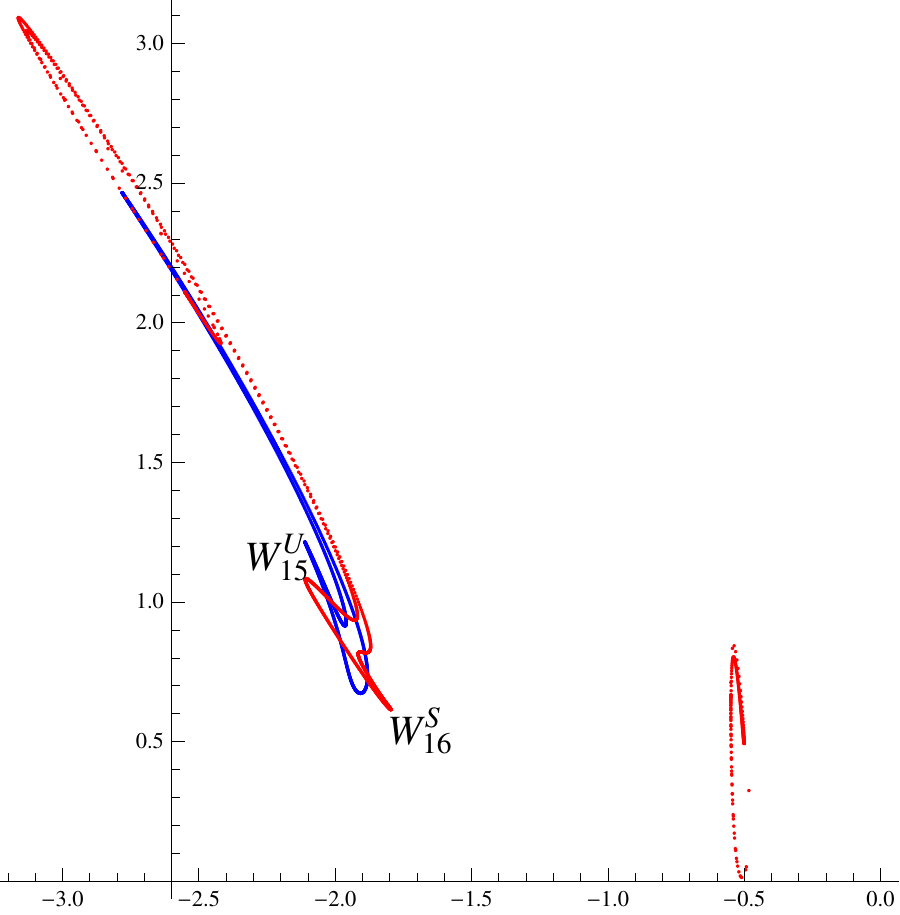}\\
 \caption{Left. Magnification of the cuts $W_{7}^{u}$ and $W_{5}^{s}$.Right. First transverse intersection between the invariant manifolds in the  outer region in the plane $(y,\dot{y})$.
 \label{outer75}}
\end{figure}

In Fig.~\ref{outercuts} we show the projection on the plane $(y,\dot{y})$ of the invariant manifolds after 16 cuts with the surface section $\Sigma'$, the first 14 cuts of the invariant manifolds are curves diffeomorphic to `small' circles, although in the Fig. \ref{outercuts} some of them are indistinguishable. In the Fig.~\ref{outer75} we show a magnification of the cuts $W_{7}^{u}$ and $W_{5}^{s}$.

The first transverse intersection between the invariant manifolds occurs between $W_{15}^{u}$ and $W_{16}^{s}$ (see Fig.~\ref{outer75}) and because of the symmetry respect to the origin of the equations in the original coordinates, we have the corresponding symmetric intersection between $W_{16}^{u}$ and $W_{15}^{s}$.

\section{Summary of results, conclusions and future work}
In this paper a Hill approximation of the restricted four-body problem has been investigated.
In the following we summarize our results and we discuss briefly some applications and future work.

$\bullet$ We have derived our model from the R4BP, where one massless particle moves under the gravitational influence of a nearby small mass (tertiary) and of two distant large masses (primary and secondary) forming an equilateral triangle with the tertiary. We have applied a symplectic scaling   to determine the limit problem when  the mass of the tertiary tends to zero and the primary and the secondary are sent to infinity. In  Theorem \ref{main theorem} we have showed that the limit exists and produces a Hamiltonian that defines our Hill approximation of the  R4BP.

$\bullet$ Our model extends  the classical lunar Hill problem  in the following sense:  when the mass of the secondary  is set to zero, we recover the equations of  classical lunar Hill problem.

$\bullet$ Our model is a good  approximation for the dynamics of the massless particle in a neighborhood of the tertiary. We have proved analytically  the existence of 4 equilibrium points near to the tertiary, as in  the case of the R4BP when the mass of the tertiary is sufficiently small. Also the  Hill  regions for our model are qualitatively the same as  those for  the R4BP.

$\bullet$ We have performed a rigorous study of the linear stability of the equilibrium points in the planar case. We have proved that the collinear equilibrium points are always  unstable, of saddle-center type, for each value of the mass ratio of the secondary vs. the primary. We have also studied the stability of the two `new' equilibrium points (which do not appear in the classical lunar Hill problem) whose  stability  depends on the value of the mass ratio: for values $\mu<\mu_{0}$ these points are center-center type, and for values $\mu>\mu_{0}$ these points are complex-saddle type,  where the threshold value $\mu_0$ has been determined explicitly. For applications to our solar system we note that the mass parameter is less than $\mu_{0}$ and consequently these equilibrium points are linearly stable.

$\bullet$ We have performed numerical experiments  to get an insight into the global behavior of this system. We have applied the Levi-Civita regularization to the equations of motion  and  computed the first return map to a suitable Poincar\'e section  for several values of the mass ratio. These numerical explorations suggest that the current system is non-integrable and exhibits the KAM phenomena. We have also performed a numerical exploration of the invariant manifolds of the Lyapunov orbits for the Jupiter mass parameter $\mu=0.00095$,  showing the formation of transverse intersections between the stable and unstable manifolds of the Lyapunov orbits -- and hence, of homoclinic orbits -- in both the  inner region and in the outer region.  It is worth to note that the branches of the invariant manifolds in the  outer region do not escape to infinity as in the classical lunar Hill problem, but they stay close the zero velocity curves.

$\bullet$ Our model can be used as a first approximation of the dynamics of a spacecraft or small satellite near an asteroid part of a  Sun-Planet-Trojan system. In these cases the value of $\mu$ is less than the  relative mass of Jupiter $\mu=0.00095$. However, our model can be applied to more general systems when the mass parameter can be much bigger, like binary stars systems; see \cite{Sch}. The dynamics of the Trojan asteroids in a neighborhood of the Lagrange points  $L_{4}$ and $L_{5}$ is much more complex, which suggest to include other relevant effects in our model, such as the libration  of the tertiary, inclinations in the spatial problem, perturbations due to oblateness or other bodies, etc. For this, a better insight of the dynamics in our model is required, such as an exploration of the periodic orbits of the system, the non-linear effects on the stability of the equilibrium points, an analytical investigation of the non-integrability of the system, Arnold diffusion, etc. The authors hope to study these problems in future works.

\textbf{Acknowledgements} The research of J.B. has been supported by the CONACyT grant \textit{Estancias posdoctorales en el extranjero}.  The author also wishes to thank Professor Marian Gidea, Mr. Alberto Garcia and the staff of Yeshiva University for their hospitality during the development of this work.

\end{document}